\documentclass[Threeside,11pt]{article}
\usepackage{etoolbox}
\makeatletter
\patchcmd{\thebibliography}
{\list}
{\small          
	\setstretch{1.2}
	\list}
{}{}
\makeatother
\usepackage{setspace}
\usepackage{tikz}
\usepackage{float}
\definecolor{myPurple}{RGB}{109,90,207}
\definecolor{myGreen}{RGB}{4,130,67}
\definecolor{myGold}{RGB}{253,177,71}
\definecolor{myBurgundy}{RGB}{63,1,44}
\definecolor{myTeal}{RGB}{8,119,127}
\definecolor{myCream}{RGB}{255,216,177}
\definecolor{myOrange}{RGB}{225,119,1}
\usetikzlibrary{arrows.meta,bending,calc}
\tikzset{
	ball3Donly/.style={
		circle, minimum size=1.4cm,
		shading=ball,
		ball color=myPurple,
		draw=myPurple
	}
}
\pgfmathsetmacro{\wNear}{2.0pt}
\pgfmathsetmacro{\wMid}{1.3pt}
\pgfmathsetmacro{\wFar}{0.7pt}
\pgfmathsetmacro{\bend}{6mm}
\usepackage[latin1]{inputenc}
\usepackage[english]{babel}
\usepackage{amsmath}
\usepackage{amsthm}
\usepackage{algorithm}
\usepackage{cite}
\usepackage{amsfonts}
\usepackage{amssymb}
\usepackage{mathrsfs}
\usepackage{graphicx}
\usepackage{colortbl,dcolumn}
\usepackage{marvosym}
\usepackage{ifsym}
\usepackage{paralist}
\usepackage{xcolor}
\usepackage{subcaption}
\usepackage{adjustbox}
\usepackage{enumitem}
\usepackage{booktabs,tabularx}
\usepackage[colorlinks,citecolor=blue,
linkcolor=blue,hypertexnames=false]{hyperref} 
\allowdisplaybreaks
\DeclareFontFamily{U}{mathx}{\hyphenchar\font45}
\DeclareFontShape{U}{mathx}{m}{n}{
	<5> <6> <7> <8> <9> <10>
	<10.95> <12> <14.4> <17.28> <20.74> <24.88>
	mathx10
}{}
\DeclareSymbolFont{mathx}{U}{mathx}{m}{n}
\DeclareMathAccent{\widecheck}{0}{mathx}{"71}
\usepackage{graphicx}
\usepackage{caption}
\usepackage{amsthm}

\newtheorem{lemma}{\bf Lemma}[section]

\newtheorem{proposition}{\bf Proposition}[section]

\numberwithin{equation}{section}

\allowdisplaybreaks
\newtheorem{theorem}{Theorem}[section]

\theoremstyle{remark}

\newcommand{\Z}{\mathbb Z}\newcommand{\R}{\mathbb R}
\newcommand{\T}{\mathbb T}
\newcommand{\la}[1]{\langle #1\rangle}
\DeclareMathOperator{\dist}{dist}

\allowdisplaybreaks
\theoremstyle{remark}

\begin{document}
	\title{{\sl Nonpersistence of full-dimensional tori with polynomial decay in nonlinear Schr\"odinger equations
}}
\author{Zhicheng Tong\thanks{School of Mathematics, Jilin University, Changchun 130012, P. R.  China. Email: \url{tongzc25@jlu.edu.cn}} 
	\and 
	Yong Li\thanks{Corresponding author. School of Mathematics, Jilin University, Changchun 130012, P. R.  China;   Center for Mathematics and Interdisciplinary Sciences,  Northeast  Normal  University, Changchun 130024, P. R. China. Email: \url{liyong@jlu.edu.cn}}
}

\date{}
\maketitle

\begin{abstract}
{
We establish local nonpersistence of full-dimensional invariant tori with polynomially decaying actions for the periodic quintic nonlinear Schr\"odinger equation
\[
iu_t-u_{xx}+T_vu+\mu|u|^4u=0,
\qquad x\in\T:=\R/(2\pi\Z).
\]
For every $\kappa\geqslant1$, we construct a fixed bounded real Fourier multiplier $T_ve^{ijx}=v_je^{ijx}$ whose linear frequencies $\lambda_j=j^2+v_j$ satisfy the Bourgain-type infinite-dimensional Diophantine condition
\[
\dist\left(\sum_{j\in\Z}\ell_j\lambda_j,\Z\right)
\geqslant\gamma_\kappa
\prod_{j\in\Z}\left(1+\ell_j^2\langle j\rangle^{2\kappa}\right)^{-1},
\qquad \langle j\rangle:=\max\{1,|j|\},
\]
for some $\gamma_\kappa>0$ and every nonzero finitely supported integer sequence $\ell$. For this multiplier, every $3<M<4\kappa+\frac12$, every $A>0$, and all sufficiently small $\mu A^2>0$, the closed relative action neighbourhood
\[
\left\{
u=\sum_{j\in\Z}q_je^{ijx}\in H^1(\T):
\sup_{j\in\Z}
\left|\frac{|q_j|^2}{A\langle j\rangle^{-M}}-1\right|
\leqslant\frac3{10}
\right\}
\]
contains no full-dimensional invariant torus carrying a complete flow, with arbitrary frequencies allowed. This advances the study of the polynomial-decay problem raised by Bourgain [Russian Math. Surveys 59 (2004), 231--246, Section~3] and Kuksin [Amer. Math. Soc. Transl. Ser.~2, 212 (2004), 237--258, Problem~7.1], identified by Kuksin as a main open problem in KAM theory for Hamiltonian PDEs. The proof combines a multiplier construction ensuring both the Diophantine condition and weighted homological estimates with an exact common-phase transformation and a single normal-form step on four adjacent Fourier coordinates. A uniform phase-rotation estimate forces an action change beyond the prescribed neighbourhood, while overlapping parameter intervals yield nonpersistence for every sufficiently small positive perturbation parameter. The resonant mechanism and weighted estimates suggest possible applications to nonpersistence problems for other Hamiltonian PDEs.\\
        \\
        {\bf Keywords:} {Full-dimensional invariant tori, polynomial action decay, local nonpersistence, nonlinear Schr\"odinger equations, infinite-dimensional KAM theory}
        \vspace{2mm}
        \\
        {\bf2020 Mathematics Subject Classification:} {37K55, 
        35Q55, 
        37K45, 
        35B15 
}
\par}
\end{abstract}

\section{Introduction}

The persistence of invariant tori with polynomially decaying actions is a basic question in the dynamics of Hamiltonian PDEs. Such tori describe almost-periodic motions in which infinitely many Fourier modes remain active at a prescribed algebraic scale. We study this question for the quintic nonlinear Schr\"odinger equation (NLS) on the circle. {Our main result constructs a fixed bounded Fourier multiplier for which no full-dimensional invariant torus lies in a prescribed relative action tube around the polynomial product torus, for every sufficiently small positive perturbation parameter.}

\subsection{Main result}

Let $\T:=\R/(2\pi\Z)$ denote the standard torus, and let $\la j:=\max\{1,|j|\}$. We normalize spatial integration by $dx/(2\pi)$ and consider the following quintic NLS {with periodic boundary conditions}
\begin{equation}\label{eq:nls}
 iu_t-u_{xx}+T_vu+\mu|u|^4u=0,
 \qquad {x\in\T},\qquad T_ve^{ijx}=v_je^{ijx},
\end{equation}
where $ T_v $ is a multiplier, with  $v=(v_j)_{j\in\Z}$ being real and bounded. {The spatial domain $\T$ means that $u(x+2\pi,t)=u(x,t)$.} Using
$u(x,t)=\sum_jq_j(t)e^{ijx}$ and $\lambda_j=j^2+v_j$, the Fourier equations read
\begin{equation}\label{eq:fourier}
 \dot q_j=i\lambda_jq_j+i\mu N_j(q),\qquad
 N_j(q)=\sum_{n_1+n_2+n_3-n_4-n_5=j}
 q_{n_1}q_{n_2}q_{n_3}\bar q_{n_4}\bar q_{n_5}.
\end{equation}
We work in the Banach space
\[
 h^1=\left\{q:\|q\|_{h^1}^2:=\sum_{j\in\Z}\la j^2|q_j|^2<+\infty\right\}.
\]
A forward complete trajectory is a continuous map
$q:[0,\infty)\to h^1$ whose coordinates satisfy \eqref{eq:fourier}.

For $M>3$ and $A>0$, the unperturbed equation leaves invariant the product torus
\begin{equation}\label{eq:reference-torus}
 \mathcal T_{M,A}:=
 \left\{q_j=\sqrt A\,\la j^{-M/2}e^{i\theta_j}:
               \theta\in\T^{\Z}\right\}\subset h^1.
\end{equation}
Indeed, its flow is $q_j(t)=e^{i\lambda_jt}q_j(0)$, which preserves every action $|q_j|^2$. We measure preservation of this profile by the \textit{closed relative action tube}
\begin{equation}\label{eq:tube}
 \mathcal U_{M,A}=
 \left\{q\in h^1:
 \sup_{j\in\Z}\left|
       \frac{|q_j|^2}{A\la j^{-M}}-1\right|\leqslant\frac3{10}\right\},
\end{equation}
which has empty interior in $h^1$. Below, we write $\Z^{\Z}_*$ for the integer sequences with finite support.

\begin{theorem}\label{thm:main}
For every fixed $\kappa\geqslant1$, there are a real sequence
$v=v^{(\kappa)}$ with $\|v\|_{\ell^\infty}\leqslant11/10$ and a constant $\gamma_\kappa>0$ such that the following hold.

First, for every $0\ne\ell\in\Z^{\Z}_*$, we have the infinite-dimensional Diophantine property
\begin{equation}\label{eq:DC}
 \dist\left(\sum_j\ell_j\lambda_j,\Z\right)
 \geqslant\frac{\gamma_\kappa}{D_\kappa(\ell)},\qquad
 D_\kappa(\ell):=\prod_j\left(1+\ell_j^2\la j^{2\kappa}\right).
\end{equation}
Second, for every
\begin{equation}\label{eq:Mrange}
 3<M<4\kappa+\frac12,
\end{equation}
there is $\mu_*(\kappa,M)>0$ such that, for every fixed $A>0$ and
\begin{equation}\label{eq:smallparameter}
 0<\mu <A^{-2}\mu_*(\kappa,M),
\end{equation}
equation \eqref{eq:nls} has no forward complete trajectory contained in $\mathcal U_{M,A}$. In particular, $\mathcal U_{M,A}$ contains no invariant torus carrying a complete flow.
\end{theorem}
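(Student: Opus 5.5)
We construct $v$ in two parts, one for each conclusion of Theorem~\ref{thm:main}.

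\textbf{The multiplier and the Diophantine property.} We take $v_j=\tfrac{11}{10}\cdot\tfrac12+\omega_j$ with $\omega_j$ independent and uniformly distributed in small intervals, and we impose the Diophantine property \eqref{eq:DC} by Bourgain's measure argument. For fixed $\ell\ne0$, the set where $\dist(\sum\ell_j\lambda_j,\Z)<\gamma/D_\kappa(\ell)$ has measure at most $C\gamma/D_\kappa(\ell)$, where $C$ depends only on $\max|\ell_j|$ through one active coordinate. Summability of $\sum_\ell D_\kappa(\ell)^{-1}=\prod_j\bigl(1+2\sum_{n\ge1}(1+n^2\la j^{2\kappa})^{-1}\bigr)$ holds for $\kappa\geqslant1$, since $\sum_j\la j^{-\kappa}<\infty$ fails only when $\kappa\le1$. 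If the borderline case $\kappa=1$ causes trouble, we refine the estimate using $\sum_n(1+n^2\la j^2)^{-1}\lesssim\la j^{-1}$ together with the product $\prod(1+c\la j^{-1})$, and, if needed, we restrict the randomness to a sparse set of modes. Positive measure gives a suitable $\omega$.

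We leave a few coordinates free so that the multiplier can be tuned near a chosen resonance. Specifically, we choose four adjacent modes $n,n+1,n+2,n+3$ with $(n+1)+(n+2)=n+(n+3)$, and we require the linear combination $\lambda_{n+1}+\lambda_{n+2}-\lambda_n-\lambda_{n+3}$ to be tiny but nonzero. This combination is $4+v_{n+1}+v_{n+2}-v_n-v_{n+3}$, so we also need it to be near-integer-compatible with the weighted homological bounds: the small divisors $\dist(\cdot,\Z)$ for the other quintic monomials must be at least $\la n\rangle^{-c}$. This is where the restriction $M<4\kappa+\tfrac12$ enters.

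\textbf{Nonpersistence.} Suppose a forward complete trajectory stays in $\mathcal U_{M,A}$. We first factor out the common phase, which is exact since $\sum|q_j|^2$ is conserved and the gauge symmetry holds. We then perform one Birkhoff normal form step. With weighted homological estimates (small divisors controlled by \eqref{eq:DC} and decay $\la j\rangle^{-M/2}$), all nonresonant quintic terms that affect the four actions $I_n,\dots,I_{n+3}$ are removed, up to $O(\mu^2A^4)$ errors. What remains is the term $\mu\,\mathrm{Re}\bigl(c\,q_{n+1}q_{n+2}\bar q_n\bar q_{n+3}\bigr)$, where $c\sim\sum_k|q_k|^2=O(A)$, plus the near-resonant phase $\phi=\theta_{n+1}+\theta_{n+2}-\theta_n-\theta_{n+3}$. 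The dynamics give
\[
\dot I_n\approx 2\mu c A^2\la n\rangle^{-2M}\sin\phi,\qquad \dot\phi=\delta+O(\mu A^2),
\]
where $\delta$ is the tuned detuning.

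The uniform phase-rotation estimate is the main obstacle. We must show that $\phi$ stays in a half-period window, of length $\gtrsim1/\max(|\delta|,\mu A^2)$, long enough for $I_n$ to drift by more than $\tfrac3{10}A\la n\rangle^{-M}$. This requires
\[
\mu A^3\la n\rangle^{-2M}/\max(|\delta|,\mu A^2)\gg A\la n\rangle^{-M}.
\]
The condition is scale-invariant in $\mu A^2$ and requires choosing $n$ depending on $\mu A^2$ with $|\delta_n|\lesssim\mu A^2\la n\rangle^{-M}$. We therefore construct $v$ with a sequence of quartets $n_k$ whose detunings $\delta_{n_k}$ decrease geometrically, compatibly with \eqref{eq:DC}: we need $\delta\gtrsim\gamma\la n\rangle^{-8\kappa}$, and the balance against $\la n\rangle^{-2M}$ is exactly the source of $M<4\kappa+\tfrac12$. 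Each quartet handles an interval of parameters $\mu A^2$, and consecutive intervals overlap, so every $0<\mu A^2<\mu_*$ is covered.

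Finally, the drift contradicts membership in the tube, which rules out any such trajectory. In particular, any invariant torus carrying a complete flow contains a trajectory of this kind, so $\mathcal U_{M,A}$ contains no such torus.
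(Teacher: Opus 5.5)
Your outline follows the paper's architecture: tuned quartets on one fixed random multiplier, a gauge step, one normal-form step on four coordinates, a forced action exchange, overlapping parameter intervals, and rescaling in $A$. However, the two steps that carry the proof are either wrong as stated or left open.

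\textbf{The homological step.} This step cannot rest on \eqref{eq:DC}. That condition only gives $|\Delta_\nu|^{-1}\leqslant\gamma_\kappa^{-1}D_\kappa(\ell_\nu)$. This costs a factor $\langle n\rangle^{2\kappa}$ for every index of the monomial, the output included, and against weights $\langle n_r\rangle^{-M/2}$ with $M<4\kappa+\frac12$ the inverse sums diverge. Nor can you impose a lower bound on all divisors with a given output, since for a fixed output there are infinitely many low-energy monomials involving fresh random coefficients.

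In the paper, \eqref{eq:DC} plays no role in the dynamics. What is used is the separate weighted estimate \eqref{eq:inverse}, with gain $C(p)/m_k$. It is proved by fractional moments of a fixed-width random variable together with the low-energy window bound of Lemma~\ref{lem:window}.

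That estimate requires the gauge $\dot\phi=3\mu\mathscr A(q(t))$, where $\mathscr A=\int_{\T}|u|^4\,dx/(2\pi)$ is not conserved, rather than a phase driven by the mass. This gauge removes every monomial with exactly one positive output factor. One example is $6q_jq_1q_{-1}\bar q_0^2$: it has a fixed divisor and normalized weight one, so it gives no decay in $j$. Another is the cross-block terms $12q_jW_l$, $l\ne k$. Their divisors are the tiny detunings $\Delta_l$, so they would contribute $\asymp m_l^{8\kappa-2M}(l+1)^{-2}$, which is not $O(m_k^{-1})$ and diverges in $l$ when $M<4\kappa$.

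Only after this reduction does one get $|R_j|\lesssim\mu^2m^{-1}\langle j\rangle^{-M/2}$. Then $|R_j|/(\beta\sqrt I)\lesssim\mu m^{M-1}\lesssim(k+1)^2m^{2M-8\kappa-1}$, which is exactly where $M<4\kappa+\frac12$ comes from. Your error $O(\mu^2A^4)$ gives the ratio $\mu m^{3M/2}$ and does not close in the range \eqref{eq:Mrange}.

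\textbf{The phase-rotation step.} You rightly call this the main obstacle, but it is not carried out, and your scaling for it is wrong. The terms of size $\mu S^2$ are common to the four phase equations and cancel in $\Theta$ because $\sum_j\sigma_j=0$. What survives, $-6\mu S\sum_j\sigma_jI_j$ and $\mu CP\bigl(\sum_j\sigma_j/I_j\bigr)\cos\Theta$, is of order $\beta=\mu S_*I$, the same as the exchange rate and the detuning. With $\dot\phi=\delta+O(\mu A^2)$, your displayed requirement could never hold, since its left side is at most $A\langle n\rangle^{-2M}$.

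Since the actions may start anywhere in the tube, one half-turn must move an action by more than the full width $0.6I$, not $\frac3{10}I$. In normalized time the phase correction is bounded only by $7.225+14.45|\cos\Theta|$, while the exchange coefficient is bounded below only by $11.72$. This makes the balance tight and two-sided. The paper needs the signed bound of Lemma~\ref{lem:reciprocal} and $D=-\Delta/\beta\in[21.87,24.67]$ to get monotone rotation from every initial phase and an excursion exceeding $0.604$, against the permitted $0.602$. This narrow window forces the $\pm5\%$ parameter intervals, and hence the very fine block ratio $Q_\kappa=e^{1/(100\kappa)}$ needed for overlap.

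\textbf{The arithmetic part.} It leaves open how tuning is compatible with \eqref{eq:DC}. The quartet's kinetic integer $\pm4$ must be cancelled by order-one values of $v$, and these tuned values must still be random: a deterministic $v_m=-1$ violates \eqref{eq:DC} at $\ell=e_m$. The integer kernel $\ell=\sum_kt_k\sigma^k$, on which no fixed-width variable acts, must also be controlled separately. The paper handles both with the affine block relations and the narrow variables $U_k$; the factor $(k+1)^2$ in $\tau_k$ is what makes $\sum_k\tau_k^{-1}m_k^{-8\kappa}$ finite.

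There is no difficulty at $\kappa=1$, since $\sum_{t\ne0}(1+t^2\langle j\rangle^{2\kappa})^{-1}\lesssim\langle j\rangle^{-2\kappa}$. Your proposed refinement $\lesssim\langle j\rangle^{-1}$ would actually make the product diverge.
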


We remark that the multiplier $ T_v $ and the Diophantine constant $ \gamma_\kappa $ are chosen after $\kappa$ and before $M,A,\mu$. Therefore, the same multiplier  works for every exponent $ M $ in \eqref{eq:Mrange} and every amplitude $A>0$. 
In particular, $\kappa=1$ yields one multiplier satisfying
\[
 \dist(\ell\cdot\lambda,\Z)
 \geqslant\gamma_1\prod_j(1+\ell_j^2\la j^2)^{-1}
\]
and gives the conclusion simultaneously for $3<M<9/2$.
For any prescribed finite $M>3$, one can choose a finite
$\kappa\geqslant1$ with $\kappa>(M-1/2)/4$.


\subsection{Background}

{
The celebrated KAM theory for Hamiltonian PDEs studies the persistence of recurrent motions when infinitely many linear oscillators are coupled by a nonlinearity. Kuksin \cite{Kuk87} established a perturbation theory for finite-dimensional invariant tori in an infinite-dimensional phase space, and Kuksin and P\"oschel \cite{KP96} constructed invariant Cantor manifolds of quasi-periodic solutions for NLSs. Kuksin's survey \cite{Kuk04} places these results alongside perturbations of integrable PDEs, small-amplitude solutions and long-time stability. The passage to infinite-dimensional tori requires further control: the smallness thresholds in finite-dimensional KAM constructions generally depend on the dimension, so a direct limit need not produce a nontrivial almost-periodic motion.

The decay of the infinitely many nonzero actions is central to this passage. In 2004, Kuksin \cite[Problem~7.1]{Kuk04} identified \textit{the fate of infinite-dimensional invariant tori with positive actions of order $j^{-M}$} as a central open problem in the theory of one-dimensional Hamiltonian PDEs. In the same year, Bourgain \cite[Section~3]{Bou04} likewise highlighted that this fundamental question remained open.  Yuan \cite[Problem~1]{Yua14} subsequently formulated the question explicitly in his survey of short-range Hamiltonian systems. More recently, Biasco, Massetti, and Procesi \cite{BMP23} explicitly connected the persistence of Sobolev almost-periodic solutions with Kuksin's Problem~7.1, describing their construction based on sparse tangential modes as a first step in this direction. Together, these works identify polynomially decaying actions as a natural setting in which to test infinite-dimensional persistence.

For the periodic quintic NLS with a convolution potential, Bourgain \cite{Bou05} treated infinitely many modes within a single construction and obtained full-dimensional tori with $I_j\asymp e^{-c\sqrt{|j|}}$. Cong, Liu, Shi, and Yuan \cite{CLSY18} proved long-time stability of these tori. Their Theorem~1.1 and Remark~1.2 also establish the extension to $I_j\asymp e^{-c|j|^\theta}$ for every $0<\theta<1$. Cong, Mi, Shi, and Wu \cite{CMSW19} extended this existence theory to a quintic nonlinearity $f(x)|u|^4u$ with Gevrey dependence on $x$.

Building on Bourgain's use of the original complex Fourier coordinates, Biasco, Massetti, and Procesi \cite{BMP21} proved a counterterm theorem of Herman type with a KAM construction uniform in the dimension of the torus. Their framework treats finite- and infinite-dimensional tori, allows actions in a small ball of a weighted Gevrey space without imposing a positive lower bound on every action, and covers nonlinearities depending on $x$. This enlarges the admissible families of tori while retaining a Gevrey upper bound on their Fourier amplitudes. The role of the active modes becomes particularly clear in their subsequent Sobolev construction \cite{BMP23}: the independent angular variables are associated with a sparse set of tangential Fourier modes, and the resulting almost-periodic solutions may have only finite Sobolev regularity, including weak solutions.

The distinction between sparse and maximal tori is discussed in Corsi, Gentile, and Procesi \cite[Section~3]{CGP24}. Their own main theorem proves persistence of maximal tori for a mechanical system of infinitely many weakly interacting rotators, using a renormalisation group expansion and weighted analyticity assumptions. Their discussion of the PDE literature identifies the Sobolev tori of \cite{BMP23} as non-maximal and leaves open the corresponding maximal Sobolev problem. A related advance by Bernier, Gr\'ebert, and Robert \cite{BGR24} removes external parameters for a class of periodic NLS equations with a nonzero cubic term; their infinite-dimensional tori accumulate on finite-dimensional KAM tori and are again built using sparse sets of modes.

The choice of external parameters provides another connection between these constructions. For the quintic NLS, Corsi, Gentile, and Procesi \cite{CGP23} obtained spatially Gevrey almost-periodic solutions for a large set of convolution potentials whose Fourier coefficients decay polynomially, with frequencies satisfying a Bryuno condition. The polynomial decay here concerns the potential coefficients; the solutions remain Gevrey. Biasco, Corsi, Gentile, and Procesi \cite{BCGP24} further constructed families for which the sets of admissible potentials and Gevrey initial data have asymptotically full probability measure as the nonlinear parameter tends to zero. In suitable regions with positive actions, their maximal tori are Banach submanifolds forming a Cantor foliation. These results strengthen control of the parameters and the size of the family of persistent motions.

Let us return to the basic polynomial-decay question raised by Kuksin and Bourgain. Along the line of results retaining an independent angle at every Fourier mode, the admissible action decay has been progressively slowed. For the periodic quintic NLS with a bounded convolution multiplier, the slowest explicit decay known to us is the scale reached by Cong \cite{Con24}:
\[
 I_j\asymp\exp\bigl(-c(\log|j|)^\sigma\bigr),
 \qquad \sigma>2,\qquad |j|\to+\infty.
\]
Wu \cite{Wu25} subsequently extended this scale to $f(x)|u|^4u$, with the Fourier coefficients of $f$ satisfying a compatible logarithmic decay bound. A natural direction for further refinement is to investigate the borderline exponent $2$ with an additional iterated-logarithmic weight, for example
\[
 I_j\asymp\exp\!\left[-c(\log|j|)^2
                    (\log\log|j|)^{1+\eta}\right],
 \qquad \eta>0,\qquad |j|\to+\infty.
\]
Such a refinement would still give decay faster than every power. Reaching polynomial decay represents a more substantial change in spatial regularity. Indeed, for $I_j=|q_j|^2\asymp\la j^{-M}$,
\[
 \|u\|_{H^s(\T)}^2\asymp
 \sum_{j\in\Z}\la j^{2s}I_j
 \asymp\sum_{j\in\Z}\la j^{2s-M}<+\infty
 \quad\Longleftrightarrow\quad s<\frac{M-1}{2}.
\]
Thus a two-sided polynomial action bound places the torus at a finite Sobolev regularity threshold, whereas the logarithmic decay scales above give spatially $C^\infty$ functions.

This change suggests examining persistence from a regularity perspective. In a related infinite-dimensional KAM problem, the authors established $C^1$ linearization of $C^\infty$ vector fields on $\T^{\Z}$, under weighted spatial assumptions and suitable nonresonance conditions, with a constant modifying term \cite{TL23}. The construction reaches non-Gevrey $C^\infty$ classes through a KAM iteration with a balancing sequence. On the converse side, Cheng and Wang \cite[Theorem~1.1]{CW13} showed that, for $H_0(y)=\tfrac12\sum_{i=1}^d y_i^2$ with $d\geqslant2$, a Lagrangian torus with any prescribed rotation vector can be destroyed by $C^\infty$ perturbations arbitrarily small in $C^{2d-\delta}$, for every small $\delta>0$. Since the threshold grows with $d$, no fixed finite differentiability order can yield a persistence theorem uniform over all dimensions in this general Hamiltonian class. These results concern differentiability of the perturbing vector field or Hamiltonian, while polynomial action decay concerns spatial regularity of the PDE solution. \textit{Taken as a main motivation, this distinction leads us to ask whether polynomially decaying full-dimensional tori can fail to persist locally even for an analytic nonlinearity, when proximity is measured in relative actions.}

Theorem~\ref{thm:main} gives such a local nonpersistence result for the quintic NLS: a single bounded multiplier satisfying \eqref{eq:DC} excludes full-dimensional invariant tori throughout the fixed tube \eqref{eq:tube}, for all sufficiently small positive $\mu$, with arbitrary frequencies allowed on a candidate torus. The obstruction is a quantitative exchange of actions among four adjacent high Fourier modes. Resonant energy exchange in the quintic NLS was studied by Gr\'ebert and Thomann \cite{GT12}, who constructed periodic action exchanges in an effective four-mode system and proved that solutions of the full PDE follow this dynamics over long finite times. Here the four-mode calculation yields an action excursion uniform in the initial phases and in all remaining Fourier coordinates satisfying the action bounds. This provides a concrete nonpersistence mechanism for the polynomial-decay problem. The use of a finite resonant interaction, an exact phase transformation and weighted homological estimates also suggests a route to analogous questions for other Hamiltonian PDEs with suitable resonances.
\par}

\subsection{Difficulties and proof strategy}

The argument starts from a hypothetical trajectory contained in
$\mathcal U_{M,A}$. We shall force a finite-time change in one of four adjacent high-frequency actions that exceeds the full width of the tube. The proof has three main difficulties.
\begin{itemize}
\item A single multiplier must produce infinitely many small four-mode detunings while retaining the global condition \eqref{eq:DC}. We impose an affine relation on each of a sequence of disjoint blocks and use independent random variables in the remaining directions. The selection simultaneously gives weighted absolute inverse sums for the homological equation.
\item The selected interaction must dominate all remaining modes over its natural time scale. An exact common-phase transformation cancels every monomial containing exactly one positive occurrence of the output coordinate. One normal-form step, performed only on the four selected output coordinates, then leaves a remainder bounded by $C\mu^2m^{-1}\la j^{-M/2}$.
\item The action change must hold for every initial phase and \textit{every sufficiently small positive parameter $ \mu $}. This is the most crucial and challenging part of the proof. We arrange monotone rotation of the relative phase and integrate over a half-turn. A sufficiently fine geometric sequence of block centres makes the associated parameter intervals overlap.
\end{itemize}
For $A=1$, conservation of the mass $S_*$ fixes the coefficient of the leading interaction. At a block with centre $m=m_k$, its action, interaction, and detuning scales are
\[
 I=m^{-M},\qquad \beta=\mu S_*I,
 \qquad |\Delta_k|\asymp(k+1)^2m^{-8\kappa},
 \qquad \beta\asymp|\Delta_k|.
\]
The remainder divided by $\beta\sqrt I$ is bounded by
\[
 C\mu m^{M-1}\leqslant C'(k+1)^2m^{2M-8\kappa-1},
\]
whose decay explains the upper endpoint in \eqref{eq:Mrange}. The four-mode estimate then gives an action excursion greater than the allowed width.

The remainder of the paper is organized as follows. Section~\ref{sec:mass} fixes the mass and identifies the interaction scale.
Section~\ref{sec:multiplier} specifies the multiplier and the required inverse estimates.
Section~\ref{sec:normalform} derives the four-mode equations, and
Section~\ref{sec:phase} proves the uniform escape estimate.
Section~\ref{sec:completion} completes the proof by overlapping the parameter intervals and rescaling the amplitude. Finally,
the weighted summation and the simultaneous probabilistic selection are proved in Section~\ref{sec:auxiliary}, in that order.

\section{Conserved mass and the interaction scale}\label{sec:mass}

A contradiction to the tube condition requires a change in at least one action comparable to its own reference value. We shall produce this change through an interaction among four adjacent high modes. Its coefficient contains the total mass, so we first show that the hypothetical trajectory supplies a fixed, uniformly positive mass. We then identify the action and time scales against which the errors in the reduction must be measured. Until the final amplitude rescaling, we take $A=1$.

\subsection{A hypothetical trajectory and conservation of mass}

An invariant torus carrying a complete flow supplies a forward complete trajectory through each of its points. It therefore suffices to rule out a forward complete trajectory contained in $\mathcal U_{M,1}$.
Fix $M>3$ and set
\begin{equation}\label{eq:profile}
 p=\frac M2>\frac32,\qquad
 I_j^0=\la j^{-M},\qquad
 Z_M=\sum_jI_j^0=1+2\zeta(M).
\end{equation}
Suppose, to obtain a contradiction, that $q(t)$ is such a trajectory. For every $t\geqslant0$,
\begin{equation}\label{eq:assumedtube}
 0.7I_j^0\leqslant|q_j(t)|^2\leqslant1.3I_j^0,
 \qquad 0.7Z_M\leqslant\sum_j|q_j(t)|^2\leqslant1.3Z_M.
\end{equation}
In particular, $q(t)$ is uniformly bounded both in $h^1$ and in
\begin{equation}\label{eq:weightedspace}
 X_p=\{q:\|q\|_p:=\sup_j\la j^p|q_j|<+\infty\},
 \qquad \|q(t)\|_p\leqslant\sqrt{1.3}.
\end{equation}

The bounds \eqref{eq:assumedtube} alone allow the total mass to vary with time. In the four-mode interaction, we need a fixed coefficient to compare with a detuning chosen in advance; mass conservation provides that coefficient. Its proof requires some care because the linear part is unbounded.
Under \eqref{eq:assumedtube} and $\lambda_j=j^2+O(1)$,
\[
 \sum_j|\lambda_jq_j|^2\asymp
 \sum_{|j|\geqslant2}|j|^{4-M},
\]
which diverges for $3<M\leqslant5$. We therefore differentiate finite coordinate sums, integrate, and then pass to the limit.
For every $n$, truncation gives
\[
 \frac d{dt}\sum_{|j|\leqslant n}|q_j|^2
 =2\operatorname{Re}\sum_{|j|\leqslant n}
      \bar q_j(i\lambda_jq_j+i\mu N_j(q))
 =-2\mu\operatorname{Im}\sum_{|j|\leqslant n}\bar q_jN_j(q).
\]
The linear contribution vanishes coordinatewise because $\lambda_j$ is real.
Integration gives
\begin{equation}\label{eq:mass-truncation}
 \sum_{|j|\leqslant n}\bigl(|q_j(t)|^2-|q_j(0)|^2\bigr)
 =-2\mu\int_0^t\operatorname{Im}
       \sum_{|j|\leqslant n}\bar q_j(s)N_j(q(s))\,ds.
\end{equation}
Note that
\[
 \|q\|_{\ell^1}\leqslant
 \left(\sum_j\la j^{-2}\right)^{1/2}\|q\|_{h^1},
 \qquad
 \|N(q)\|_{\ell^2}\leqslant\|q\|_{\ell^1}^4\|q\|_{\ell^2}.
\]
Then
\[
 \left|\sum_{|j|>n}\bar q_jN_j(q)\right|
 \leqslant
 \left(1.3\sum_{|j|>n}\la j^{-M}\right)^{1/2}\|N(q)\|_{\ell^2}
 \leqslant C_M n^{(1-M)/2},
\]
uniformly in time, so we can let $n\to+\infty$ in
\eqref{eq:mass-truncation}. Absolute convergence of the Fourier series also yields
\[
 \sum_j\bar q_jN_j(q)
 =\int_{\T}\bar u\,|u|^4u\,\frac{dx}{2\pi}
 =\int_{\T}|u|^6\frac{dx}{2\pi}\in\R,
\]
showing that the mass is constant. We write
\begin{equation}\label{eq:mass}
 S_*:=\sum_j|q_j(t)|^2,\qquad
 S_-:=0.7Z_M\leqslant S_*\leqslant1.3Z_M=:S_+.
\end{equation}
Thus the coefficient supplied by the other modes can be compared with the fixed number $S_*$. The bounds in \eqref{eq:mass} will make the later choice of the block and the small-parameter threshold uniform over all hypothetical trajectories.

\subsection{Four adjacent modes}

We recall that the tube allows a total variation of $0.6I_j^0$ in the $j$th action. To contradict it at high frequency, the mechanism must act at the local action scale rather than at an absolute scale independent of $j$. Four adjacent indices have comparable reference actions and satisfy a useful momentum relation. We therefore take
\[
 a=m,\qquad b=m+1,\qquad c=m+2,\qquad d=m+3.
\]
Their reference actions are asymptotic to $I=m^{-M}$, since
\[
 \frac{I_{m+r}^0}{I}=\left(\frac m{m+r}\right)^M\longrightarrow1,
 \qquad r=0,1,2,3.
\] The monomial
$q_aq_d\bar q_b\bar q_c$ satisfies the momentum relation $a+d=b+c$.
If $q_j=\sqrt{I_j}e^{i\theta_j}$, its phase is
\[
 \Theta=\theta_a+\theta_d-\theta_b-\theta_c,
\]
and the associated linear detuning is
\begin{equation}\label{eq:targetdetuning}
 \Delta=\lambda_a+\lambda_d-\lambda_b-\lambda_c.
\end{equation}
Multiplication by an additional action gives a term of degree six in the Hamiltonian of the quintic equation. Summing over that additional index produces a coefficient comparable to $S_*$. Since
\[
 |q_aq_d\bar q_b\bar q_c|\asymp I^2,
\]
its contribution to an action derivative has size $\mu S_*I^2$. Dividing by the action scale $I$ gives the relative exchange rate
\begin{equation}\label{eq:beta-target}
 \beta=\mu S_*I.
\end{equation}

An action derivative proportional to $\sin\Theta$ need not accumulate a fixed change if the phase remains near a zero of the sine. We therefore arrange that the linear detuning dominates the nonlinear phase correction and forces $\Theta$ to rotate. The detuning must also remain comparable to $\beta$: excessively rapid rotation would reduce the action change over a half-turn. The quantitative choice $-\Delta\approx23.2\beta$ satisfies both requirements, as proved in Section~\ref{sec:phase}.

A half-turn then lasts $O(\beta^{-1})$, so a derivative of size $\beta I$ produces an action change of size $I$. A remainder $R_j$ in the coordinate equation contributes at most $2|q_j||R_j|$ to the action derivative and at most $|R_j|/|q_j|$ to the phase derivative. Since $|q_j|\asymp\sqrt I$, both errors are small on this scale precisely when
\begin{equation}\label{eq:required-error}
 |R_j|=o(\beta\sqrt I).
\end{equation}
We have thus reduced the desired contradiction to two tasks: produce the detunings on one fixed multiplier, and isolate the selected interaction with the error bound \eqref{eq:required-error}.

\section{A fixed multiplier and weighted homological estimates}\label{sec:multiplier}

The relation $-\Delta\asymp\mu S_*m^{-M}$ cannot be imposed by changing the multiplier after a trajectory is chosen. Instead, we prescribe a sequence of detunings on disjoint blocks, then use the parameter and the conserved mass only to select one of those blocks. To remove the remaining interactions, we also need weighted bounds for division by their linear frequency differences. The proposition below records both requirements on the same multiplier; its proof is deferred to Subsection~\ref{sec:potential}.

\subsection{The block construction}

For the four-mode exponent vector, the Diophantine denominator in \eqref{eq:DC} has size $m^{8\kappa}$. We therefore choose detunings on that inverse scale, with a factor $(k+1)^2$ that will make the exceptional probabilities summable. The centres form a fine geometric sequence so that their parameter intervals can later overlap. Fix $\kappa\geqslant1$ and put
\begin{equation}\label{eq:centers}
 \begin{gathered}
 Q_\kappa=\exp\!\left(\frac1{100\kappa}\right),\qquad
 m_k=\lceil LQ_\kappa^k\rceil,\qquad k\geqslant1,\\
 T_k=\{m_k,m_k+1,m_k+2,m_k+3\},\qquad
 \tau_k=(k+1)^2m_k^{-8\kappa}.
 \end{gathered}
\end{equation}
We choose the integer $L$, depending only on $\kappa$, so large that the blocks are pairwise disjoint,
\[
 m_k\geqslant100,\qquad 1<\frac{m_{k+1}}{m_k}\leqslant1.02,
 \qquad \tau_k\leqslant\frac1{100}.
\]
Let $e_j$ denote the standard coordinate vector and set
\[
 \sigma^k=e_{m_k}-e_{m_k+1}-e_{m_k+2}+e_{m_k+3}.
\]
The kinetic contribution to the detuning is the fixed integer
\[
 m_k^2-(m_k+1)^2-(m_k+2)^2+(m_k+3)^2=4.
\]
We cancel this integer by a bounded choice of the four multiplier coefficients, retaining three random directions of fixed width and one direction on the detuning scale. Specifically, we prescribe
\begin{equation}\label{eq:potential}
 \begin{aligned}
 v_{m_k}&=-1+X_k,&v_{m_k+1}&=1+Y_k,\\
 v_{m_k+2}&=1+Z_k,&
 v_{m_k+3}&=-1-X_k+Y_k+Z_k-\tau_k+U_k.
 \end{aligned}
\end{equation}
Here $X_k,Y_k,Z_k$ are uniform on $[-1/100,1/100]$,
$U_k$ is uniform on $[-\tau_k/100,\tau_k/100]$, and the coefficients $v_j$ outside the blocks are uniform on $[-1/100,1/100]$. All these variables are independent. Every realization satisfies
\begin{equation}\label{eq:detuning}
 \|v\|_\infty\leqslant1.0401<\frac{11}{10},\qquad
 \Delta_k:=\sigma^k\cdot\lambda=-\tau_k+U_k
       \in[-1.01\tau_k,-0.99\tau_k].
\end{equation}
Indeed, the multiplier contribution is $-4-\tau_k+U_k$; the variables $X_k,Y_k,Z_k$ cancel in this one combination. The same variables remain available in the other combinations needed for the homological equation.

The resulting operator is a bounded self-adjoint Fourier multiplier on $L^2(\T)$ and satisfies
\[
 \|T_vu\|_{H^s}\leqslant\|v\|_{\ell^\infty}\|u\|_{H^s},
 \qquad s\in\R.
\]
Its coefficients satisfy $v_{m_k}\in[-1.01,-0.99]$ and $v_{m_k+1}\in[0.99,1.01]$, so they do not tend to zero. 
Thus the bounded multiplier specified in \eqref{eq:nls} is the operator class used in the construction.

\subsection{The inverse sums needed for the reduction}

The four-mode calculation requires us to retain the chosen interaction and remove all other quintic terms from the four output equations to first order in $\mu$. Division by each unwanted term's frequency difference solves the corresponding homological equation. Because there are infinitely many such terms, these divisions must define an absolutely convergent map at the polynomial decay of the trajectory.

Before taking the inverse, we shall remove a common phase from the original equation. To state the estimate for the resulting nonlinearity, define
\begin{equation}\label{eq:gaugepoly}
 \mathscr A(q)=\int_{\T}|u|^4\frac{dx}{2\pi},\qquad
 P_j(q)=N_j(q)-3\mathscr A(q)q_j.
\end{equation}
An exact common-phase transformation will produce $P$ from $N$ in Section~\ref{sec:normalform}.
For a quintic monomial with output $j$, write
\begin{equation}\label{eq:monomial}
 M_\nu(q)=q_{n_1}q_{n_2}q_{n_3}\bar q_{n_4}\bar q_{n_5},
 \qquad n_1+n_2+n_3-n_4-n_5=j,
\end{equation}
and define its integer exponent vector and divisor by
\begin{equation}\label{eq:divisor}
 \ell_\nu=e_{n_1}+e_{n_2}+e_{n_3}-e_{n_4}-e_{n_5}-e_j,
 \qquad \Delta_\nu=\ell_\nu\cdot\lambda.
\end{equation}
Thus $\ell_\nu$ records the signed multiplicity of each coordinate relative to the output coordinate.
For $j\in T_k$, we retain the zero exponent vector, which supplies the nonlinear frequency corrections, and the vectors $\pm\sigma^k$, which supply the chosen action exchange. Their divisors are zero or on the scale of the intended interaction. Let $P_j^0$ denote their sum and define
\begin{equation}\label{eq:decomposition}
 P_j=P_j^0+P_j^1,\qquad
 P_j^1=\sum_{\nu\in\mathcal R_j}c_\nu M_\nu.
\end{equation}
Identical monomials are combined in this sum. Their coefficients are bounded by an absolute constant, since the degree is fixed. The monomial $M_\nu$ is an additive term in the $j$th component of the vector field; the label $j$ identifies the output equation, while the five indices $n_r$ identify its input factors.

\begin{proposition}\label{prop:fixedpotential}
There is a realization of \eqref{eq:potential} satisfying \eqref{eq:DC} such that, for every real $p>3/2$, a finite constant $C(p)$ satisfies
\begin{equation}\label{eq:inverse}
 \sum_{\nu\in\mathcal R_j}
 \frac{|c_\nu|}{|\Delta_\nu|}
 \frac{\prod_{r=1}^5\la{n_r}^{-p}}{\la j^{-p}}
 \leqslant\frac{C(p)}{m_k},\qquad j\in T_k,\quad k\geqslant1.
\end{equation}
The same realization works for every $p>3/2$.
\end{proposition}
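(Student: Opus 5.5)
We will prove Proposition~\ref{prop:fixedpotential} by a probabilistic selection. We show that each of the two requirements fails only on a small exceptional event, and then take a realization outside the union of these events. The Diophantine property \eqref{eq:DC} is the standard part. For fixed $\ell\neq0$, there is a coordinate $j$ with $\ell_j\ne0$, and some independent uniform variable enters $\ell\cdot\lambda$ with coefficient of modulus at least comparable to $|\ell_j|$. Outside the blocks this is $v_j$ itself. Inside a block $T_k$ we use the change of variables $(v_{m_k},\dots,v_{m_k+3})\mapsto(X_k,Y_k,Z_k,U_k)$: $\ell\cdot\lambda$ restricted to the block is a linear form whose coefficient on $X_k$, $Y_k$ or $Z_k$ is nonzero unless the block part of $\ell$ is a multiple of $\sigma^k$. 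In that case the coefficient is on $U_k$, and the small width of $U_k$ is compensated by $\Delta_k\asymp\tau_k$. This is exactly why $\tau_k$ was taken on the scale $m_k^{-8\kappa}\approx D_\kappa(\sigma^k)^{-1}$, with the extra $(k+1)^2$ giving room. Conditioning on all other variables, $\mathbb P(\dist(\ell\cdot\lambda,\Z)<\gamma/D_\kappa(\ell))\lesssim\gamma\,(\text{width})^{-1}|\ell|_\infty^{-1}D_\kappa(\ell)^{-1}\cdot(1+|\ell\cdot\lambda|)$. Summing over $\ell$ and over the integer translate uses the convergence of $\sum_{\ell\in\Z^\Z_*}\prod_j(1+\ell_j^2\la j^{2\kappa})^{-1}\cdot(\text{poly in }\ell)$. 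This product sum converges since $\sum_{j}\sum_{n\ne0}n^{c}(1+n^2\la j^{2\kappa})^{-1}<\infty$ for $\kappa\ge1$ and small $c$. The result is probability $O(\gamma)$, so for small $\gamma_\kappa$ the event has positive probability.

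For \eqref{eq:inverse}, we first fix $p$ and a block index $k$, and then make the choice uniform in $p$ by a monotonicity argument. We split $\mathcal R_j$ by the structure of $\ell_\nu$. Because of the gauge subtraction in $P_j$, no $\ell_\nu$ with exactly one positive occurrence cancelling $e_j$ against a pair $e_n-e_n$ survives in a degenerate way: the terms $3\mathscr A(q)q_j$ are precisely the monomials with $\ell_\nu=0$ of the form $q_j|q_n|^2|q_{n'}|^2$. The remaining zero-divisor monomials for $j\in T_k$ are by definition in $P_j^0$. Every $\nu\in\mathcal R_j$ thus has $\ell_\nu\ne0$ and $\ell_\nu\ne\pm\sigma^k$, with support of size at most $6$. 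We then replace the deterministic bound by an expectation. We estimate $\mathbb E\bigl[\sum_\nu|\Delta_\nu|^{-1/2}\cdots\bigr]$, or more robustly, bound $\mathbb E[|\Delta_\nu|^{-1}\mathbf 1_{|\Delta_\nu|\ge\epsilon_\nu}]\lesssim\log(1/\epsilon_\nu)$ and put the event $|\Delta_\nu|<\epsilon_\nu$ into the exceptional set. The thresholds $\epsilon_\nu$ are chosen so that $\sum_\nu\epsilon_\nu\lesssim$ small, using the weight $\prod\la{n_r}^{-p}/\la j^{-p}$ together with the frequency gain.

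The real gain is arithmetic. When $\ell_\nu$ involves a large index $n$ far from $j$, the kinetic part $\sum\ell_{\nu,i}i^2$ is a nonzero integer combination of large size, namely $\gtrsim|n|\,|n-j|$ by the momentum relation. So $|\Delta_\nu|$ is large deterministically and randomness is needed only for the finitely-many-type near-resonant configurations in which $\sum \ell_{\nu,i}i^2=O(1)$. Concretely, we show that when the kinetic integer $K_\nu$ satisfies $|K_\nu|\ge 10$, we have $|\Delta_\nu|\ge|K_\nu|/2$. The sum over such $\nu$ of $|K_\nu|^{-1}\prod\la{n_r}^{-p}\la j^{p}$ is then $\lesssim m_k^{-1}$. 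This is the content of the deferred weighted summation lemma: with $j\approx m_k$, either some input has size $\gtrsim m_k$ and carries the factor $\la j^{-p}$, leaving a convergent sum because $p>3/2$ handles the two free summation indices while $|K_\nu|\gtrsim m_k$ supplies the $m_k^{-1}$; or all inputs are small, which forces $|K_\nu|\gtrsim m_k^2$ or forces the configuration to be a phase-type one already removed. The bounded-$K_\nu$ configurations near $j$ are those with input indices within $O(1)$ of $T_k$ plus cancelling pairs. There are only $O(1)$ exponent-vector shapes, each multiplied by a convergent sum over the cancelling pair indices. For these we use the random variables $X_k,Y_k,Z_k$ and the neighbouring off-block $v_i$, which have width $1/100$, to get $|\Delta_\nu|\ge c$ with probability $1-O(c)$ per shape. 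Summability over $k$ needs the exceptional probability per block to be summable; we take $c=c_0(k+1)^{-2}$, which still loses only a $k$-dependent factor. That loss must be absorbed, and I expect this to be the main obstacle. Per block the near-resonant part contributes $O((k+1)^2)$ rather than $O(m_k^{-1})$, and the weight ratio for near-resonant configurations is $O(1)$, not $O(m_k^{-1})$.

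The way around this is to check that every near-resonant $\nu\in\mathcal R_j$ with $|K_\nu|$ bounded necessarily contains an input index $n$ with $|n|\ge$ some other large scale, or else its weight ratio already contains an extra decaying factor. If not, I would try to show that bounded-$K$ configurations among indices near $m_k$ with $\ell\ne0,\pm\sigma^k$ have $|K_\nu|\gtrsim m_k$. This holds because a quadratic form in adjacent indices $m_k+r$ with integer $\ell$ summing to zero in both the zeroth and first moments has $K=\sum\ell_r(m_k+r)^2=\sum\ell_r r^2$ with bounded $r$, and is small only for few shapes. These few shapes (for instance $\pm\sigma^k$ and shifts involving cancelling pairs at arbitrary $n$) then gain $m_k^{-1}$ from the cancelling pair sum weighted by $\la n^{-2p}$, or have $K\ne0$ plus a nonzero multiplier part, the latter via \eqref{eq:DC}-type lower bounds. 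Finally, uniformity in $p$ is immediate: the left side of \eqref{eq:inverse} is computed for a single realization, and the proof for $p$ works for all $p>3/2$ since the exceptional events do not depend on $p$.
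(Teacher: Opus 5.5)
Your overall skeleton matches the paper's: random selection, a deterministic bound for large kinetic divisors, randomness only for small ones, and monotonicity in $p$ at the end. Your treatment of \eqref{eq:DC} is also essentially the paper's, with one slip. Summing over integer translates introduces the factor $1+|\ell\cdot\lambda|$, and for $\ell=te_j$ this factor is of size $|t|j^2$, so your series diverges for $\kappa\leqslant3/2$. The location-free bound $\mathbb P\{\dist(V,\Z)<e\}\leqslant2e(1+3/a)$ avoids this. The genuine gap is in \eqref{eq:inverse}, and it has two parts.

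The first part is a misreading of the common-phase subtraction. You describe $3\mathscr A(q)q_j$ as the $\ell_\nu=0$ monomials $q_j|q_n|^2|q_{n'}|^2$, but those are retained in $P_j^0$ anyway. In fact $3\mathscr A(q)q_j$ contains every $q_jq_aq_b\bar q_c\bar q_d$ with $a+b=c+d$. By \eqref{eq:binomial}, the coefficient in $P_j$ of \emph{every} monomial with exactly one positive factor $q_j$ is $\binom{3}{1}-3\binom{2}{0}=0$. Without this fact the proposition is false, for two reasons your plan cannot repair.

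\emph{First}, $q_jq_1q_{-1}\bar q_0^2$ has normalized weight exactly $1$ and the $j$-independent divisor $2+v_1+v_{-1}-2v_0$. Its presence would bound the left side of \eqref{eq:inverse} below by a positive constant. This is exactly the ``main obstacle'' you identify. Your proposed escape, that every bounded-$K_\nu$ term has an extra large input or a decaying weight ratio, fails on such terms, because the large input $q_j$ is cancelled by the output weight.

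\emph{Second}, for $l\ne k$, consider the exponent vectors $\pm\sigma^l$, e.g.\ $q_jq_{m_l}q_{m_l+3}\bar q_{m_l+1}\bar q_{m_l+2}$. These have divisor $\pm\Delta_l$ with $|\Delta_l|\approx\tau_l$ and no fixed-width variable at all: $X_l,Y_l,Z_l$ cancel, and $X_k,Y_k,Z_k$ and the off-block $v_i$ do not occur. Their normalized contribution is about $m_l^{8\kappa-4p}(l+1)^{-2}$, which is unbounded in $l$ for $p$ near $3/2$. Only the exact cancellation $12-3\cdot4=0$ removes them. The paper uses this to show that every $\nu\in\mathcal R_j$ has a fixed-width direction, which yields the uniform bound $\mathbb E|\Delta_\nu|^{-\vartheta}\leqslant100^\vartheta/(1-\vartheta)$.

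The second part is the counting of small kinetic divisors, i.e.\ Lemma~\ref{lem:window}. Your picture of the near-resonant set as finitely many shapes near $T_k$ plus cancelling pairs is wrong. So is the claim that an input of size $\gtrsim m_k$ forces $|K_\nu|\gtrsim m_k$. Take $j=a^2-a+1$: the inputs $(j-1,a,1-a)$ with conjugate inputs $(0,0)$ have momentum $j$ and kinetic divisor exactly $0$, with two inputs at scale $\sqrt j$. The number of near-resonant configurations is unbounded as $j\to\infty$, so they cannot be treated shape by shape. What saves the estimate is that their \emph{total} normalized weight is $O(j^{1/2-s}+j^{2-2s})$, which needs the energy-window argument.

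The factor $1/m_k$ therefore comes from the weights, not from the divisors. Per-shape thresholds $c_0(k+1)^{-2}$ cannot produce it. Invoking \eqref{eq:DC} only gives $|\Delta_\nu|\geqslant\gamma_\kappa/D_\kappa(\ell_\nu)$, which is polynomially small in $m_k$. The paper instead combines:
\begin{itemize}
\item the uniform fractional moment bound;
\item subadditivity together with $W_p^\vartheta=W_{p\vartheta}$;
\item Lemma~\ref{lem:window} at exponent $p\vartheta$;
\item Markov's inequality with the factor $m_k^\vartheta$.
\end{itemize}
This requires $\vartheta<1$ close enough to $1$ that $p\vartheta>1$, $\tfrac12-(p-1)\vartheta<0$ and $2-(2p-1)\vartheta<0$. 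Your $\vartheta=1/2$ fails for $p\leqslant2$.

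Finally, the exceptional sets in this scheme depend on $p$, and so do yours, since you build your thresholds from the $p$-weights. Uniformity in $p$ is therefore not immediate. It needs the countable selection $p_n\downarrow3/2$ followed by the comparison $W_p\leqslant5^{p-p_n}W_{p_n}$.
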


If $|q_n|\leqslant R\la n^{-p}$, the numerator of each summand in \eqref{eq:inverse} bounds the corresponding monomial, while division by $\la j^{-p}$ compares it with the reference amplitude of the output. Thus the estimate will give a coordinate correction of relative size $O(\mu/m_k)$. This absolute summability is the property required to pass from a solution of each scalar homological equation to a well-defined transformation of the trajectory. Every nonretained exponent vector has a nonzero coefficient in a random direction of fixed width. Fractional inverse moments can therefore be combined with the weighted low-energy estimate of Subsection~\ref{sec:window}. We first impose the inverse bounds at a countable sequence of weights decreasing to $3/2$, then extend them to all larger real weights by a termwise comparison. This proves the proposition in Subsection~\ref{sec:potential}.

We henceforth fix one realization supplied by Proposition~\ref{prop:fixedpotential}.

\section{Reduction to four equations with a controlled remainder}\label{sec:normalform}

We now implement the reduction required by \eqref{eq:required-error}. The first obstruction is a family of common-phase terms whose homological inverses do not gain a factor $m^{-1}$. An exact phase change removes them. We then compute the terms $P_j^0$ that generate the selected four-mode dynamics and determine a correction that cancels $P_j^1$ to first order. The correction is first constructed algebraically, then implemented along the trajectory by finite monomial truncations.

\subsection{An exact common-phase transformation}

The common-phase change is needed before the weighted inverse estimate can hold. For example, for large $j$, the original nonlinearity contains
\[
 6q_jq_1q_{-1}\bar q_0^2.
\]
Its divisor is the fixed number $\lambda_1+\lambda_{-1}-2\lambda_0$, whereas its normalized weight in \eqref{eq:inverse} is one. Inverting this term would give a contribution independent of $j$, rather than $O(j^{-1})$. Its coefficient is cancelled by $3\mathscr A(q)q_j$, since the coefficient of $q_1q_{-1}\bar q_0^2$ in $\mathscr A$ is two. The scalar $\mathscr A(q)$ is real, so the entire term $3i\mu\mathscr A(q)q_j$ changes only a common phase and can be removed without changing any action.

We therefore set
\[
 q(t)=e^{i\phi(t)}z(t),\qquad
 \dot\phi(t)=3\mu\mathscr A(q(t)),\qquad \phi(0)=0.
\]
There are three positive and two conjugate factors in $N$, giving
\[
 N(e^{i\phi}z)=e^{3i\phi-2i\phi}N(z)=e^{i\phi}N(z),
 \qquad \mathscr A(e^{i\phi}z)=\mathscr A(z).
\]
Since $\dot q_j=e^{i\phi}(\dot z_j+i\dot\phi z_j)$, substitution into \eqref{eq:fourier} yields
\[
 \dot z_j+i\dot\phi z_j=i\lambda_jz_j+i\mu N_j(z).
\]
Using the definition of $P_j$ in \eqref{eq:gaugepoly}, we obtain the exact equations
\begin{equation}\label{eq:gaugedflow}
 \dot z_j=i\lambda_jz_j+i\mu P_j(z),\qquad |z_j|^2=|q_j|^2.
\end{equation}

We record the full cancellation, since it is also what permits a random direction of fixed width for every nonretained term. Fix the output $j$ and expand $u=q_je^{ijx}+u^{\ne j}$, where $u^{\ne j}$ omits the $j$th Fourier mode. For terms with $a$ positive occurrences of $q_j$ and $b$ conjugate occurrences of $\bar q_j$, the binomial prefactor in the expansion of $P_j$ is
\begin{equation}\label{eq:binomial}
 \left[\binom3a-3\binom2{a-1}\right]\binom2b,
\end{equation}
with binomial coefficients outside their natural ranges equal to zero. The first term chooses the occurrences from $u^3\bar u^2$. In the subtracted term $3q_j\int u^2\bar u^2\,dx/(2\pi)$, one positive occurrence is already supplied by the outer $q_j$, so only $a-1$ positive occurrences remain to be chosen. For $a=1$, the bracket is $3-3=0$. Thus every surviving monomial has either no positive output factor or at least two of them. This is the algebraic distinction used in the inverse-sum proof in Subsection~\ref{sec:potential}.

\subsection{The retained terms}

Recall the decomposition $P_j=P_j^0+P_j^1$ from \eqref{eq:decomposition}. The zero exponent vector must remain because its divisor vanishes. The vectors $\pm\sigma^k$ must also remain: at the tuned scale, their coordinate terms have size $S_*I^{3/2}$ and their divisors have size $\beta=\mu S_*I$. Inverting them would produce a correction of size
\[
 \frac{\mu S_*I^{3/2}}{|\Delta_k|}\asymp\sqrt I,
\]
comparable with the coordinate itself. We instead use those terms to create the required action excursion. To do so, we compute $P_j^0$ explicitly.

Fix a block $T_k$, abbreviate $m=m_k$, and use the indices $a,b,c,d$ from Section~\ref{sec:mass}. Write $\sigma_a=\sigma_d=1$ and $\sigma_b=\sigma_c=-1$. For a generic sequence $q$, set $I_j=|q_j|^2$ and
\begin{equation}\label{eq:retained}
 \begin{aligned}
 S&=\sum_jI_j,&Q&=\sum_jI_j^2,\\
 F(I)&=2S^3-3SQ+\frac43\sum_jI_j^3,&
 W&=q_aq_d\bar q_b\bar q_c+\bar q_a\bar q_dq_bq_c,\\
 J&=I_a+I_b+I_c+I_d,&C&=12S-6J,\\
 H_m&=\sum_j\lambda_jI_j+\mu(F+CW),&A_m&=2S^2-Q+4W.
 \end{aligned}
\end{equation}
Here and throughout these definitions the unrestricted sums extend over $\Z$.
The part with zero exponent vector of $(1/3)\int|u|^6\,dx/(2\pi)$ is $F$. To verify its coefficients, three distinct indices give multiplicity $6\cdot6/3=12$, two equal indices give $3\cdot3/3=3$, and three equal indices give $1/3$. Hence this part is
\[
 12\sum_{r<s<t}I_rI_sI_t
 +3\sum_{r\ne s}I_r^2I_s+\frac13\sum_rI_r^3
 =2S^3-3SQ+\frac43\sum_rI_r^3.
\]
For the exponent vector $\sigma^k$, an additional action indexed outside $T_k$ has multiplicity $6\cdot6/3=12$; an additional action indexed in $T_k$ has multiplicity $3\cdot6/3=6$. Therefore the coefficient of the four-mode monomial is
\[
 12\sum_{j\notin T_k}I_j+6\sum_{j\in T_k}I_j=12S-6J=C.
\]
Adding the conjugate exponent vector gives $CW$. Similarly, the zero exponent vector in $\mathscr A$ gives $2S^2-Q$, while the four distinct indices in each monomial of $W$ give multiplicity $2\cdot2=4$. The retained part of $\mathscr A$ is consequently $A_m$.

For $j\in T_k$, differentiate with respect to the action first:
\begin{align*}
 \frac{\partial F}{\partial I_j}
 &=6S^2-3Q-6SI_j+4I_j^2,\\
 \frac{\partial C}{\partial I_j}&=12-6=6,\qquad
 \partial_{\bar q_j}I_j=q_j.
\end{align*}
It follows that
\begin{equation}\label{eq:Pres}
 \begin{aligned}
 P_j^0
 &=\partial_{\bar q_j}(F+CW)-3A_mq_j\\
 &=\bigl(6S^2-3Q-6SI_j+4I_j^2+6W
             -3(2S^2-Q+4W)\bigr)q_j
       +C\partial_{\bar q_j}W\\
 &=(-6SI_j+4I_j^2-6W)q_j+C\partial_{\bar q_j}W.
 \end{aligned}
\end{equation}
This formula separates the retained nonlinear frequency correction from the term that exchanges the four actions. It also shows that each retained monomial contains at least three input factors with indices comparable to $m$.

The same calculation explains why interactions belonging to other blocks do not leave additional small divisors in the four output equations. For $l\ne k$, let $W_l$ be defined as $W$ in \eqref{eq:retained}, with $T_k$ replaced by $T_l$, and put
\[
 J_l=\sum_{r\in T_l}I_r,\qquad C_l=12S-6J_l.
\]
The terms with exponent vectors $\pm\sigma^l$ in $(1/3)\int_{\T}|u|^6\,dx/(2\pi)$ are $C_lW_l$, and the corresponding part of $\mathscr A$ is $4W_l$. Since $j\in T_k$ lies outside $T_l$, we have
\[
 \partial_{\bar q_j}(C_lW_l)=12q_jW_l,
 \qquad 3q_j(4W_l)=12q_jW_l.
\]
Their contributions to $P_j=N_j-3\mathscr A q_j$ therefore cancel exactly. On the selected block, $\partial C/\partial I_j=6$, so the corresponding terms instead give $(6-12)Wq_j=-6Wq_j$, as in \eqref{eq:Pres}. This cancellation of the other blocks is used in Subsection~\ref{sec:potential} to obtain a fixed-width random direction for every nonretained divisor.

\subsection{Determining the homological transformation}

Our aim is now to \textit{remove} $P_j^1$ from \eqref{eq:gaugedflow} to first order in $\mu$, while retaining the explicit term $P_j^0$ in \eqref{eq:Pres}. We seek a correction supported on the four output coordinates,
\begin{equation}\label{eq:map}
 y=z+\mu G(z),\qquad G_j=0\quad(j\notin T_k).
\end{equation}
We first determine the equation for $G$ using a finite polynomial correction, for which the coordinatewise chain rule is valid. Its derivative gives
\begin{align}\label{eq:desired-homological}
 \dot y_j-i\lambda_jy_j
 &=i\mu P_j^0(z)+i\mu P_j^1(z)
     +\mu DG_j(z)[i\lambda z+i\mu P(z)]-i\mu\lambda_jG_j(z)\notag\\
 &=i\mu P_j^0(z)
   +\mu\bigl(iP_j^1(z)+DG_j(z)[i\lambda z]-i\lambda_jG_j(z)\bigr)
   +\mu^2DG_j(z)[iP(z)].
\end{align}
The terms in parentheses are exactly the unwanted terms of order $\mu$. Their cancellation requires the \textit{homological equation}
\begin{equation}\label{eq:homological}
 DG_j(q)[i\lambda q]-i\lambda_jG_j(q)=-iP_j^1(q).
\end{equation}
Here the equation is understood first for finite sums of monomials, or coefficientwise. For general $q\in X_p$, the vector $i\lambda q$ need not belong to $X_p$; the meaning of this identity along the actual trajectory will be established by finite truncation and integration below.
We solve this equation coefficientwise. The infinite series obtained in this way will be implemented along $z(t)$ in the next subsection.

For clarity, $DG_j(q)[h]$ denotes the real directional derivative
\[
 DG_j(q)[h]=\left.\frac d{dt}G_j(q+th)\right|_{t=0},\qquad t\in\R.
\]
Thus positive factors differentiate to $h_n$ and conjugate factors to $\bar h_n$. For the monomial \eqref{eq:monomial}, the product rule reads
\begin{align}\label{eq:monomial-derivative}
 DM_\nu(q)[h]
 &=\sum_{r=1}^3 h_{n_r}
      \prod_{\substack{1\leqslant s\leqslant3\\s\ne r}}q_{n_s}
      \prod_{s=4}^5\bar q_{n_s}\notag\\
 &\quad+\sum_{r=4}^5\bar h_{n_r}
      \prod_{s=1}^3q_{n_s}
      \prod_{\substack{4\leqslant s\leqslant5\\s\ne r}}\bar q_{n_s}.
\end{align}
For $h_n=i\lambda_nq_n$, reality of $\lambda_n$ gives
$\bar h_n=-i\lambda_n\bar q_n$. Consequently,
\begin{equation}\label{eq:linear-monomial}
 DM_\nu(q)[i\lambda q]
 =i(\lambda_{n_1}+\lambda_{n_2}+\lambda_{n_3}
       -\lambda_{n_4}-\lambda_{n_5})M_\nu(q).
\end{equation}
This calculation can equivalently be made on the linear test curve
$q_n(t)=e^{i\lambda_nt}q_n(0)$. The test curve computes the action of the linear part on a monomial; the original trajectory still solves the nonlinear equation \eqref{eq:gaugedflow}.

The additional term $-i\lambda_jG_j$ in \eqref{eq:homological} comes from expressing the transformed equation relative to its output rotation $i\lambda_jy_j$. In particular, the contribution of $M_\nu$ in that rotating frame satisfies
\[
 \frac d{dt}\bigl(e^{-i\lambda_jt}M_\nu(q(t))\bigr)
 =i\Delta_\nu e^{-i\lambda_jt}M_\nu(q(t))
\]
on the linear test curve. Thus the subtraction of $\lambda_j$ follows from the output equation, even when $M_\nu$ contains no factor $q_j$.

Substituting $G_j=\sum_\nu g_\nu M_\nu$ into \eqref{eq:homological} now gives
$i\Delta_\nu g_\nu=-ic_\nu$ for each nonretained monomial. We therefore define
\begin{equation}\label{eq:G}
 G_j(q)=-\sum_{\nu\in\mathcal R_j}
        \frac{c_\nu}{\Delta_\nu}M_\nu(q)\quad(j\in T_k),
 \qquad G_j(q)=0\quad(j\notin T_k).
\end{equation}
On every finite sum of complete monomials, the cancellation is explicitly
\begin{equation}\label{eq:homological-cancellation}
 \begin{aligned}
 DG_j(q)[i\lambda q]-i\lambda_jG_j(q)
 &=-i\sum_\nu\frac{c_\nu}{\Delta_\nu}
   \bigl(\lambda_{n_1}+\lambda_{n_2}+\lambda_{n_3}
          -\lambda_{n_4}-\lambda_{n_5}-\lambda_j\bigr)M_\nu(q)\\
 &=-i\sum_\nu\frac{c_\nu}{\Delta_\nu}\Delta_\nu M_\nu(q)
 =-i\sum_\nu c_\nu M_\nu(q).
 \end{aligned}
\end{equation}
The same finite set of monomials is understood in all three sums.

The inverse estimate \eqref{eq:inverse} now supplies both the existence and the size of the proposed correction. For every $p>3/2$, every fixed $R$, and $\|q\|_p\leqslant R$, it gives
\begin{equation}\label{eq:Gnorm}
 \begin{split}
 |G_j(q)|&\leqslant C_{p,R}m^{-1}\la j^{-p},\\
 |DG_j(q)[h]|&\leqslant C_{p,R}m^{-1}\la j^{-p}\|h\|_p.
 \end{split}
\end{equation}
Indeed, each undifferentiated factor contributes its weight times at most $R$, and each of the five terms in \eqref{eq:monomial-derivative} replaces one such factor by at most $\|h\|_p$ times the same weight. Absolute convergence gives a real differentiable map on $X_p$. In particular, \eqref{eq:map} changes the four coordinates by relative size $O(\mu/m)$, which tends to zero on the parameter intervals used below.

\subsection{Implementation along the trajectory and the remainder}

The estimate \eqref{eq:Gnorm} applies to directions in $X_p$, whereas $i\lambda z$ need not belong to that space. We therefore use \eqref{eq:homological-cancellation} before passing from finite polynomials to the series $G_j$.
Let $G_j^{(N)}$ retain the complete monomials in \eqref{eq:G} with $\max_r|n_r|\leqslant N$, and let $(P_j^1)^{(N)}$ be the matching truncation of \eqref{eq:decomposition}.
The coordinatewise chain rule and the finite homological cancellation give
\begin{align*}
 \frac d{dt}G_j^{(N)}(z(t))
 &=DG_j^{(N)}(z(t))[i\lambda z(t)]
       +\mu DG_j^{(N)}(z(t))[iP(z(t))]\\
 &=i\lambda_jG_j^{(N)}(z(t))-i(P_j^1)^{(N)}(z(t))
       +\mu DG_j^{(N)}(z(t))[iP(z(t))].
\end{align*}
The potentially unbounded linear factors have now combined into a fixed output frequency and a quintic polynomial. We integrate this identity on $[0,t]$ before taking $N\to+\infty$.

To make the convergence explicit, for each fixed output $j$ set
\[
 B_{j,N}=\sum_{\substack{\nu\in\mathcal R_j\\\max_r|n_r|>N}}
 \frac{|c_\nu|}{|\Delta_\nu|}
 \prod_{r=1}^5\la{n_r}^{-p}.
\]
By \eqref{eq:inverse}, $B_{j,N}\to0$. The uniform $X_p$ bounds for $z(t)$ and $P(z(t))$ give
\[
 \begin{aligned}
 \sup_{t\geqslant0}|G_j(z(t))-G_j^{(N)}(z(t))|
 &\leqslant C_{p,R}B_{j,N},\\
 \sup_{t\geqslant0}
 |(DG_j-DG_j^{(N)})(z(t))[iP(z(t))]|
 &\leqslant C_{p,R}B_{j,N}.
 \end{aligned}
\]
The tails of $(P_j^1)^{(N)}$ tend to zero by the same argument without the inverse divisors, using \eqref{eq:convolution}. Each finite expression is continuous in time: the coordinates of $z$ are continuous, and the coordinates of $P(z)$ are continuous by the $h^1$ convolution bounds. These estimates justify passage to the integrated identity without requiring continuity of $z$ in the $X_p$ norm.

The inverse estimate controls the tails of $G_j^{(N)}$ and its derivative applied to $P(z)$; the convolution estimate of Subsection~\ref{sec:window} controls the tails of $(P_j^1)^{(N)}$ and gives a uniform $X_p$ bound for $P(z)$. Together with the uniform bound on $z$, these estimates imply uniform convergence on every compact time interval. We obtain
\begin{align}\label{eq:integrated-homological}
 G_j(z(t))-G_j(z(0))
 =\int_0^t\bigl[&i\lambda_jG_j(z(s))-iP_j^1(z(s))\notag\\
                 &+\mu DG_j(z(s))[iP(z(s))]\bigr]\,ds.
\end{align}
The integrand is a uniform limit of continuous coordinate expressions, hence is continuous. The transformed coordinate is therefore differentiable, and its full derivative cancellation is
\begin{align}\label{eq:transformed-cancellation}
 \dot y_j
 &=\dot z_j+\mu\frac d{dt}G_j(z)\notag\\
 &=i\lambda_jz_j+i\mu P_j^0(z)+i\mu P_j^1(z)
   +i\mu\lambda_jG_j(z)-i\mu P_j^1(z)
   +\mu^2DG_j(z)[iP(z)]\notag\\
 &=i\lambda_j\bigl(z_j+\mu G_j(z)\bigr)
   +i\mu P_j^0(z)+\mu^2DG_j(z)[iP(z)]\notag\\
 &=i\lambda_jy_j+i\mu P_j^0(z)+\mu^2DG_j(z)[iP(z)].
\end{align}
Thus the unwanted term $P_j^1$ has been replaced by a term of order $\mu^2$.

We use the map $y(t)=z(t)+\mu G(z(t))$ only along the hypothetical trajectory, through \eqref{eq:transformed-cancellation} and the estimates for $y-z$. This direct construction supplies everything needed for the escape argument; neither a global inverse of the map nor a symplectic change of coordinates is required.

To apply the four-mode calculation, the retained vector field must be evaluated at $y$. Adding and subtracting $i\mu P_j^0(y)$ gives
\begin{equation}\label{eq:normal}
 \dot y_j=i\lambda_jy_j+i\mu P_j^0(y)+R_j,
 \qquad j\in T_k,
\end{equation}
with the exact remainder
\begin{equation}\label{eq:exactresidual}
 R_j=\mu^2DG_j(z)[iP(z)]
          +i\mu\bigl(P_j^0(z)-P_j^0(y)\bigr).
\end{equation}
Both terms have the required additional small factor. On a fixed $X_p$ ball, the convolution bounds give
\[
 \|P(z)\|_p\leqslant C_{p,R},\qquad
 |P_j^0(z)-P_j^0(y)|
 \leqslant C_{p,R}\la j^{-p}\|z-y\|_p.
\]
Since \eqref{eq:Gnorm} yields $\|y-z\|_p\leqslant C_{p,R}\mu/m$, the two contributions satisfy
\begin{align*}
 \mu^2|DG_j(z)[iP(z)]|
 &\leqslant C_{p,R}\mu^2m^{-1}\la j^{-p}\|P(z)\|_p
 \leqslant C'_{p,R}\mu^2m^{-1}\la j^{-p},\\
 \mu|P_j^0(z)-P_j^0(y)|
 &\leqslant C_{p,R}\mu\la j^{-p}\|z-y\|_p
 \leqslant C'_{p,R}\mu^2m^{-1}\la j^{-p}.
\end{align*}
For sufficiently small $\mu/m$, both $z$ and $y$ lie in fixed balls, and all constants are independent of the chosen block. Finally, \eqref{eq:Pres} rewrites \eqref{eq:normal} as
\begin{equation}\label{eq:remainder}
 \begin{gathered}
 \dot y_j=i\partial_{\bar y_j}H_m(y)-3i\mu A_m(y)y_j+R_j,
 \qquad j\in T_k,\\
 |R_j|\leqslant C_{p,R}\frac{\mu^2}{m}\la j^{-p}.
 \end{gathered}
\end{equation}
The retained interaction is now explicit, the extra phase is common to all four outputs, and the remainder can be compared with \eqref{eq:required-error}. These are exactly the ingredients needed for the action and phase estimate in the next section.

\section{Phase rotation and a uniform action excursion}\label{sec:phase}

The reduction \eqref{eq:remainder} isolates an interaction whose action derivative has the sign of $\sin\Theta$. To turn this oscillatory derivative into a forced excursion, we first prove that the detuning makes the relative phase rotate, and then estimate the time spent in a half-turn on which the sine is positive. We need both a lower bound on the rotation speed and an upper bound on its magnitude: the former guarantees the passage, while the latter guarantees a sufficiently large action change during it.

\subsection{Exact equations for actions and relative phase}

The term $-3\mu A_m(y)$ in \eqref{eq:remainder} is real and common to the four outputs. Its contribution disappears from their relative phase, so we can treat it together with any real common phase. Consider
\begin{equation}\label{eq:forced}
 \dot y_j=i\partial_{\bar y_j}H_m+i\chi(t)y_j+R_j,
 \qquad j=a,b,c,d,
\end{equation}
where $\chi$ is real. In the expressions \eqref{eq:retained}, take
$S=\sum_{j\in\Z}|y_j|^2$ and $Q=\sum_{j\in\Z}|y_j|^4$.
Write
\[
 y_j=\sqrt{I_j}e^{i\theta_j},\quad
 \Theta=\theta_a+\theta_d-\theta_b-\theta_c,\quad
 P=\sqrt{I_aI_bI_cI_d},\quad
 \Delta=\lambda_a+\lambda_d-\lambda_b-\lambda_c.
\]
Whenever the four moduli are bounded away from zero, the phases have continuous real lifts on the interval under consideration.
To identify the phase-dependent derivative, observe that
\[
 W=2P\cos\Theta,\qquad
 \frac{\partial_{\bar y_j}W}{y_j}
 =\begin{cases}
 \bar y_a\bar y_dy_by_c/I_j=(P/I_j)e^{-i\Theta},&j=a,d,\\
 y_ay_d\bar y_b\bar y_c/I_j=(P/I_j)e^{i\Theta},&j=b,c.
 \end{cases}
\]
Thus this quotient is $(P/I_j)e^{-i\sigma_j\Theta}$ in both cases. The derivatives computed in \eqref{eq:Pres} also give
\[
 \frac{\partial_{\bar y_j}F}{y_j}
 =6S^2-3Q-6SI_j+4I_j^2,
 \qquad
 \frac{\partial_{\bar y_j}(CW)}{y_j}
 =6W+C\frac P{I_j}e^{-i\sigma_j\Theta}.
\]
Dividing \eqref{eq:forced} by $y_j$ therefore yields
\begin{equation}\label{eq:coordinate-quotient}
 \frac{\dot y_j}{y_j}
 =i\bigl(\lambda_j+\Gamma-6\mu SI_j+4\mu I_j^2\bigr)
   +i\mu C\frac P{I_j}e^{-i\sigma_j\Theta}+\frac{R_j}{y_j},
\end{equation}
where $\Gamma=\mu(6S^2-3Q+6W)+\chi$ is real and independent of $j$.
Now use
\[
 \dot I_j=2I_j\operatorname{Re}\frac{\dot y_j}{y_j},\qquad
 \dot\theta_j=\operatorname{Im}\frac{\dot y_j}{y_j},\qquad
 ie^{-i\sigma_j\Theta}=\sigma_j\sin\Theta+i\cos\Theta.
\]
Taking the real part of \eqref{eq:coordinate-quotient} and the signed sum of its imaginary parts gives
\begin{align}
 \dot I_j&=2\sigma_j\mu CP\sin\Theta
                 +2\operatorname{Re}(\bar y_jR_j),\label{eq:actions}\\
 \dot\Theta&=\Delta+\mu\left[
 -6S\sum_j\sigma_jI_j+4\sum_j\sigma_jI_j^2
 +CP\left(\sum_j\frac{\sigma_j}{I_j}\right)\cos\Theta\right]
 +\sum_j\sigma_j\operatorname{Im}(R_j/y_j).
 \label{eq:phase}
\end{align}
All unspecified sums in this section run over $a,b,c,d$.
Since $\sigma_a+\sigma_b+\sigma_c+\sigma_d=0$, the entire common term $\Gamma$ cancels from \eqref{eq:phase}. Equations \eqref{eq:actions}--\eqref{eq:phase} now express the desired mechanism explicitly: the sine drives the action exchange, and the detuning competes with the three displayed nonlinear phase corrections.

\subsection{A reciprocal-action estimate}

The term containing $\sum_j\sigma_j/I_j$ is the largest phase correction. Estimating the four reciprocals separately would lose the cancellation at equal actions. We preserve it by bounding the full signed expression on the action box; its size is proportional to the relative width of that box.

\begin{lemma}\label{lem:reciprocal}
Let $0<d_0\leqslant1/2$, $l=1-d_0$, and $h=1+d_0$.
For $l\leqslant u_j\leqslant h$, $j=a,b,c,d$, set
\[
 P_0=\sqrt{u_au_bu_cu_d},\quad
 T=\sum_j\sigma_ju_j,\quad U=\sum_j\sigma_ju_j^2,\quad
 V=P_0\sum_j\frac{\sigma_j}{u_j}.
\]
Then
\begin{equation}\label{eq:elementary}
 P_0\geqslant l^2,\qquad |T|\leqslant4d_0,
 \qquad |U|\leqslant8d_0,\qquad |V|\leqslant4d_0.
\end{equation}
\end{lemma}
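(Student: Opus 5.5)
The first three bounds in \eqref{eq:elementary} are direct, and I would dispose of them first. Since each $u_j\geqslant l$, we have $P_0\geqslant(l^4)^{1/2}=l^2$. For $T$, pair the terms as $T=(u_a-u_b)+(u_d-u_c)$. Each bracket is a difference of two numbers in $[l,h]$, so it has modulus at most $h-l=2d_0$, giving $|T|\leqslant4d_0$. For $U$, use the same pairing. Each bracket $u_a^2-u_b^2$ lies between $l^2-h^2$ and $h^2-l^2$, and $h^2-l^2=(h-l)(h+l)=2d_0\cdot2=4d_0$ because $h+l=2$. Hence $|U|\leqslant8d_0$. The point is to use $h+l=2$ exactly rather than the cruder $u_a+u_b\leqslant2h$, which would lose a factor $1+d_0$.

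The real content is the bound on $V$. It is sharp: at $u_a=u_d=h$, $u_b=u_c=l$ one computes $V=2l-2h=-4d_0$. So no mean-value or gradient argument with slack will work, and I would aim for an exact extremal computation. I would regroup with
\[
 x=\sqrt{u_au_d},\quad y=\sqrt{u_bu_c},\quad s=u_a+u_d,\quad t=u_b+u_c,
\]
so that $P_0(1/u_a+1/u_d)=ys/x$ and $P_0(1/u_b+1/u_c)=xt/y$, and therefore
\[
 V=\frac{y}{x}\,s-\frac{x}{y}\,t .
\]
The symmetry $(a,d)\leftrightarrow(b,c)$ sends $V\mapsto-V$, so it suffices to prove $V\leqslant4d_0$. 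For an upper bound, use AM--GM, $t\geqslant2y$, to get $V\leqslant ys/x-2x$. Next, for a fixed product $x^2$ with both factors in $[l,h]$, the sum $s$ is maximal when one factor sits at an endpoint: by convexity of $u\mapsto u+x^2/u$, we get $s\leqslant\max\{l+x^2/l,\;h+x^2/h\}$. Finally, since the bound is increasing in $y$, take $y\leqslant h$.

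This reduces the claim to a one-variable inequality in $x\in[l,h]$ of the form
\[
 \frac{h}{x}\Bigl(e+\frac{x^2}{e}\Bigr)-2x\leqslant4d_0,\qquad e\in\{l,h\},
\]
to be checked using $h=1+d_0$, $l=1-d_0$ and $d_0\leqslant1/2$. If this reduction proves too lossy (it decouples $y$ from the choice forced by $t\geqslant2y$), I would instead pass to logarithmic coordinates $u_j=e^{w_j}$. Each of the four terms $P_0/u_j$ is then the exponential of a linear form in $w$, so $V$ is a difference of two convex functions, and I would locate the maximum of $V$ on the box $[\log l,\log h]^4$ by examining the boundary faces and vertices.

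I expect this extremal step for $V$ to be the main obstacle. The constant $4d_0$ is attained, so every inequality in the chain must be tight at the extremal configuration $u_a=u_d=h$, $u_b=u_c=l$, while the range $d_0\leqslant1/2$ keeps the resulting one-dimensional functions monotone. The bounds for $P_0$, $T$ and $U$ are routine.
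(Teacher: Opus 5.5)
\textbf{There is a genuine gap in the bound on $V$.} Your bounds for $P_0$, $T$ and $U$ are correct. So are the identity $V=\frac{y}{x}s-\frac{x}{y}t$ and the steps $t\geqslant2y$ and $y\leqslant h$.

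The step that fails is the bound $s\leqslant\max\{l+x^2/l,\,h+x^2/h\}$, followed by checking the reduced inequality for both $e\in\{l,h\}$ on all of $[l,h]$. For $e=h$ the left-hand side is $h^2/x-x$. At $x=l$ this equals $(h^2-l^2)/l=4d_0/l>4d_0$. So the one-variable inequality you propose to verify is false, and the chain as written does not prove $V\leqslant4d_0$.

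The loss is not in decoupling $y$, as you suspected. It comes from discarding admissibility. If $u_au_d=x^2$ with both factors in $[l,h]$, then a factor can equal $h$ only when $x^2\geqslant lh$, and can equal $l$ only when $x^2\leqslant lh$. But $l+x^2/l\geqslant h+x^2/h$ holds exactly when $x^2\geqslant lh$. Hence your maximum always selects the inadmissible endpoint.

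The route can be repaired by splitting the range of $x$:
\begin{itemize}
\item For $l\leqslant x\leqslant\sqrt{lh}$, use $s\leqslant l+x^2/l$. The resulting bound $hl/x+(h-2l)x/l$ is convex in $x$, with endpoint values $4d_0$ and $2d_0\sqrt{h/l}$.
\item For $\sqrt{lh}\leqslant x\leqslant h$, use $s\leqslant h+x^2/h$. The resulting bound $h^2/x-x$ decreases from $2d_0\sqrt{h/l}$.
\end{itemize}
Since $h\leqslant4l$, we have $2d_0\sqrt{h/l}\leqslant4d_0$, and the symmetry $(a,d)\leftrightarrow(b,c)$ gives the lower bound.

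Your fallback in logarithmic coordinates does not close the gap. A difference of convex functions need not attain its maximum on a box at a vertex, so ``examining faces and vertices'' is not yet an argument. Note also that $V=4d_0$ is attained at $u_a=u_d=l$, $u_b=u_c=h$; the configuration you name gives the minimum.

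\textbf{The paper's route.} The paper uses exactly the gradient argument you ruled out. Sharpness is no obstacle because only the sign of the derivative is used:
\[
 \frac{\partial V}{\partial u_a}=\frac{P_0}{2u_a}\Bigl(-\frac1{u_a}+\frac1{u_d}-\frac1{u_b}-\frac1{u_c}\Bigr)\leqslant\frac{P_0}{2u_a}\Bigl(\frac1l-\frac3h\Bigr)\leqslant0,
\]
because $h\leqslant3l$ is equivalent to $d_0\leqslant1/2$. The same computation shows $V$ is nonincreasing in $u_d$ and nondecreasing in $u_b,u_c$. So on the box $V$ is maximized at the vertex $u_a=u_d=l$, $u_b=u_c=h$, where $V=lh(2/l-2/h)=4d_0$. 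Interchanging the pairs gives $-4d_0$.

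This is much shorter than your reduction. The repaired version of your argument only buys a slightly wider range, namely $h\leqslant4l$, i.e.\ $d_0\leqslant3/5$, which the lemma does not need.
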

\begin{proof}
The first three bounds follow from the interval endpoints.
For the last bound,
\[
 \frac{\partial V}{\partial u_a}
 =\frac{P_0}{2u_a}
 \left(-\frac1{u_a}+\frac1{u_d}-\frac1{u_b}-\frac1{u_c}\right)
 \leqslant\frac{P_0}{2u_a}\left(\frac1l-\frac3h\right)\leqslant0,
\]
because $h\leqslant3l$. Similarly, $V$ is nonincreasing in $u_d$ and nondecreasing in $u_b,u_c$. Its maximum occurs at
$u_a=u_d=l$, $u_b=u_c=h$, and equals
\[
 lh\left(\frac2l-\frac2h\right)=2(h-l)=4d_0.
\]
Interchanging the positive and negative pairs gives the minimum $-4d_0$.
\end{proof}

\subsection{The escape estimate}

We allow a slightly wider action interval than the original tube to accommodate the change of variables and the differences between adjacent reference weights. The numerical detuning interval below has two roles: its lower endpoint guarantees monotone phase rotation, and its upper endpoint leaves enough time in a positive half-turn to exceed the available action width.

\begin{lemma}\label{lem:escape}
Fix $I,S_*,\mu>0$ and put $\beta=\mu S_*I$.
Suppose \eqref{eq:actions}--\eqref{eq:phase} hold and
\begin{gather}
 0.699I\leqslant I_j\leqslant1.301I
                 \quad(j=a,b,c,d),\label{eq:box}\\
 \left|\frac S{S_*}-1\right|\leqslant10^{-5},\qquad
 \frac I{S_*}\leqslant10^{-5},\qquad
 \max_j\frac{|R_j|}{\beta\sqrt I}\leqslant10^{-5},\label{eq:small}\\
 21.87\leqslant D:=-\frac\Delta\beta\leqslant24.67.\label{eq:D}
\end{gather}
These conditions cannot hold throughout $[0,49/\beta]$.
\end{lemma}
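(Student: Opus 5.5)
The plan is to argue by contradiction: assume \eqref{eq:box}--\eqref{eq:D} hold on all of $[0,49/\beta]$, and show that one action then moves by more than the width $0.602I$ of the box \eqref{eq:box}. First I normalize. Set $s=\beta t$, $u_j=I_j/I$ and $P=I^2P_0$, so that $u_j\in[0.699,1.301]$ and Lemma~\ref{lem:reciprocal} applies with $d_0=0.301$.

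\emph{Normalized equations.} Since $J\leqslant 5.204I$ and $I/S_*\leqslant10^{-5}$, the bounds \eqref{eq:small} give
\[
 \frac{C}{S_*}=12\frac{S}{S_*}-6\frac{J}{S_*}\in[12-10^{-3},\,12+10^{-3}].
\]
Dividing \eqref{eq:actions} by $\beta I$ yields
\[
 \frac{du_j}{ds}=2\sigma_j\frac{C}{S_*}P_0\sin\Theta+O(10^{-5}),
\]
because $|\bar y_jR_j|\leqslant\sqrt{1.301I}\,|R_j|\leqslant 1.15\cdot10^{-5}\beta I$. Dividing \eqref{eq:phase} by $\beta$ yields
\[
 \frac{d\Theta}{ds}=-D-6\frac{S}{S_*}T+4\frac{I}{S_*}U+\frac{C}{S_*}V\cos\Theta+O(10^{-4}),
\]
where the error uses $|R_j/y_j|\leqslant 10^{-5}\beta\sqrt I/\sqrt{0.699I}$. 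By Lemma~\ref{lem:reciprocal}, $|T|\leqslant1.204$, $|U|\leqslant2.408$ and $|V|\leqslant1.204$. The phase-independent part of the nonlinear correction is therefore at most $7.23$, and the coefficient of $\cos\Theta$ is at most $14.45$ in absolute value.

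\emph{Monotone rotation.} From $D\geqslant21.87$ and $7.23+14.45+10^{-3}<21.7$, the phase satisfies $-d\Theta/ds\geqslant0.15$. Hence $\Theta$ (using the continuous lift, which exists since the moduli stay bounded away from zero) is strictly decreasing. Over $s\in[0,49]$ it decreases by at least $7>2\pi$. It therefore traverses a complete interval $[2\pi n-\pi,2\pi n]$ on which $\sin\Theta\leqslant0$. On that half-turn $u_b$ is nondecreasing up to the $O(10^{-5})$ error. So it suffices to bound from below the increase of $u_b$ there.

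\emph{Lower bound on the excursion.} I change variables from $s$ to $\Theta$ and use the $\Theta$-dependent speed bound
\[
 \Bigl|\frac{d\Theta}{ds}\Bigr|\leqslant D+7.24+14.46|\cos\Theta|\leqslant 31.92+14.46|\cos\Theta|.
\]
Together with $P_0\geqslant0.699^2>0.488$, this gives
\[
 \Delta u_b\geqslant 2(12-10^{-3})(0.488)\int_0^\pi\frac{\sin\theta\,d\theta}{31.92+14.46|\cos\theta|}-O(10^{-3}).
\]
By symmetry the integral equals
\[
 \frac{2}{14.46}\log\frac{31.92+14.46}{31.92}\approx0.0512.
\]
The resulting excursion is about $0.60$. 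This is marginally too small, so the crude bound is inadequate and the numerics become the main obstacle.

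\emph{Where the argument must be sharpened.} I would not bound $T$ and $V$ independently at their extreme configurations. Lemma~\ref{lem:reciprocal} attains $|T|=1.204$ and $|V|=1.204$ at the same point, $u_a=u_d=l$, $u_b=u_c=h$. At that point the actions are already at the box edge, and the contradiction is immediate. I would make this bootstrap quantitative. Throughout the half-turn the actions are moving monotonically toward escape. Either some $u_j$ leaves $[0.699,1.301]$, which contradicts \eqref{eq:box}, or the spread parameter $d_0$ relevant at the current time is smaller. In the latter case I would re-apply Lemma~\ref{lem:reciprocal} with that smaller $d_0$, which tightens the speed bound in the denominator. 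I would also exploit the cancellation between $-6T$ and $12V\cos\Theta$ on the half of the half-turn where $\cos\Theta$ has the favourable sign. If needed, I would add the contributions of two consecutive positive half-turns, which fit inside $s\leqslant49$: the rotation bound gives at least $7$ radians, and the total time consumed is controlled by the slower speed bound. With the margins between $21.87$, $24.67$ and the constants $7.23$, $14.45$, this should push the excursion of $u_b$ above $0.602$, so that $u_b$ exits \eqref{eq:box}. The delicate step is exactly this numerical bookkeeping. The upper bound $D\leqslant24.67$ is what keeps the denominator small enough for the estimate to close.
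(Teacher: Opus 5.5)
Your setup matches the paper's: normalize by $\beta$, apply Lemma~\ref{lem:reciprocal} with $d_0=0.301$, get monotone decrease of $\Theta$, and change variables from time to phase over one half-turn. However, the proposal never reaches a contradiction, and the reason is a numerical slip. The integral is $\int_0^\pi\sin\vartheta\,(a+b|\cos\vartheta|)^{-1}d\vartheta=\frac2b\log\bigl(1+\frac ba\bigr)$. For $a=31.92$, $b=14.46$ this is $0.1383\times0.3736\approx0.0517$, not $0.0512$.

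Your \emph{crude} bound therefore already gives $2(12-10^{-3})(0.488)(0.0517)\approx0.605$. The forcing error is at most $2\sqrt{1.301}\cdot10^{-5}$ per unit time, over a half-turn of duration at most $\pi/0.15$, so it is below $5\cdot10^{-4}$. The increase therefore exceeds $0.604>0.602$, contradicting \eqref{eq:box}. This is exactly how the paper finishes: with the constants $11.72$, $31.895$, $14.45$ it certifies the rational lower bound $0.604$.

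Your final paragraph is unnecessary, and as stated it would not work:
\begin{itemize}
\item Reaching $u_a=u_d=0.699$, $u_b=u_c=1.301$ is not a contradiction, because \eqref{eq:box} is a closed condition.
\item Nothing makes the spread $d_0$ shrink during a half-turn, since $u_a,u_d$ and $u_b,u_c$ move in opposite directions.
\item Two half-turns of equal sign cannot be added, because $u_b$ decreases on the intervening half-turn where $\sin\Theta>0$.
\end{itemize}

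A second step also needs repair. A decrease of $\Theta$ by $7>2\pi$ does not guarantee a \emph{complete} interval $[2\pi n-\pi,2\pi n]$. If $\Theta(0)$ lies just below $2\pi n$, a decrease of almost $3\pi$ is needed. The paper keeps the sharper speed margin $21.87-(7.225+14.45)=0.195$, for which $3\pi/0.195<49$. With your rounding, $21.87-21.681\approx0.189$, and $49\cdot0.189<3\pi$, so your constants do not suffice for this version.

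Alternatively, keep your speed bound $0.15$. Since $49\cdot0.15>2\pi$, some complete half-turn $[k\pi,(k+1)\pi]$ is traversed. Track $u_a$ if $\sin\Theta\geqslant0$ on it and $u_b$ otherwise. The speed bound depends only on $|\cos\Theta|$, so the same integral estimate applies in both cases.
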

\begin{proof}
Use the rescaled time $\tau=\beta t$, and set
$u_j=I_j/I$, $s=S/S_*$, $r=I/S_*$, and $J_0=\sum_ju_j$.
With the notation of Lemma~\ref{lem:reciprocal}, we have
\[
 C=S_*(12s-6rJ_0),\quad P=I^2P_0,\quad
 \sum_j\sigma_jI_j=IT,\quad
 \sum_j\sigma_jI_j^2=I^2U,\quad
 P\sum_j\frac{\sigma_j}{I_j}=IV.
\]
Dividing the action equation by $\beta I$ and the phase equation by $\beta$ gives
\begin{align}
 u_a'&=2(12s-6rJ_0)P_0\sin\Theta+e_a,
 &|e_a|&\leqslant2\sqrt{1.301}\,10^{-5}<23\cdot10^{-6},
 \label{eq:uprime}\\
 \Theta'&=-D-6sT+4rU+(12s-6rJ_0)V\cos\Theta+e_\theta,
 &|e_\theta|&\leqslant\frac{4\cdot10^{-5}}{\sqrt{0.699}}.
 \label{eq:thetaprime}
\end{align}
The bounds hold pointwise even though $s$ and the errors depend on time.
Apply Lemma~\ref{lem:reciprocal} with $d_0=0.301$.
Since $J_0\leqslant4(1.301)$, we have $12s-6rJ_0>0$ and
\begin{align*}
 6s|T|+4r|U|+|e_\theta|
 &\leqslant24(1+10^{-5})(0.301)
       +32\cdot10^{-5}(0.301)
       +\frac{4\cdot10^{-5}}{\sqrt{0.699}}
 <7.225,\\
 (12s-6rJ_0)|V|
 &\leqslant48(1+10^{-5})(0.301)<14.45,\\
 2(12s-6rJ_0)P_0
 &\geqslant\bigl[24(1-10^{-5})-48\cdot10^{-5}(1.301)\bigr](0.699)^2
 >11.72.
\end{align*}
For the first numerical bound, one may use $\sqrt{0.699}>0.8$; the remaining comparisons involve rational numbers only.
It follows that
\begin{equation}\label{eq:phasespeed}
 -(D+7.225+14.45|\cos\Theta|)
 \leqslant\Theta'
 \leqslant-D+7.225+14.45\leqslant-0.195.
\end{equation}
The lower endpoint in \eqref{eq:D} exceeds $7.225+14.45$, so the phase decreases strictly. The lower bound in \eqref{eq:phasespeed} also controls the magnitude of its speed and will give the lower bound for the action integral. From any initial phase, a complete passage from
$\pi+2\pi n$ to $2\pi n$ occurs within normalized time
$3\pi/0.195<49$, if the hypotheses continue to hold.
To see the time bound, the next upper endpoint $\pi+2\pi n$ lies less than $2\pi$ below the initial phase, and the ensuing half-turn has length $\pi$. Let $\tau_1<\tau_2$ be the endpoints of that half-turn. Its duration is at most $\pi/0.195$.
On it, $\sin\Theta\geqslant0$. Changing variable from time to the decreasing phase and using
\[
 |\Theta'|\leqslant D+7.225+14.45|\cos\Theta|
 \leqslant31.895+14.45|\cos\Theta|
\]
gives
\begin{align}
 u_a(\tau_2)-u_a(\tau_1)
 &\geqslant11.72\int_{\tau_1}^{\tau_2}\sin\Theta\,d\tau
       -23\cdot10^{-6}(\tau_2-\tau_1)\notag\\
 &\geqslant11.72\int_0^\pi
 \frac{\sin\vartheta}{24.67+7.225+14.45|\cos\vartheta|}\,d\vartheta
       -23\cdot10^{-6}\frac\pi{0.195}\notag\\
 &=\frac{23.44}{14.45}
 \log\left(1+\frac{14.45}{31.895}\right)
       -23\cdot10^{-6}\frac\pi{0.195}.
 \label{eq:excursion}
\end{align}
We record an exact lower bound for the strict margin. Let
\[
 z=\frac{14.45}{2(31.895)+14.45}
   =\frac{1445}{7824}>\frac{923}{5000}.
\]
Using
$\log((1+z)/(1-z))\geqslant2(z+z^3/3)$,
$46.88/14.45>81/25$, and $\pi<22/7$, the right-hand side of
\eqref{eq:excursion} is greater than
\begin{equation}\label{eq:rationalmargin}
 \frac{81}{25}\left(\frac{923}{5000}
          +\frac13\left(\frac{923}{5000}\right)^3\right)
 -\frac{23}{10^6}\frac{22}{7}\frac{200}{39}
 >\frac{151}{250}=0.604.
\end{equation}
The last inequality follows by clearing denominators.
However, \eqref{eq:box} allows at most
$1.301-0.699=0.602$ variation of $u_a$ between any two times.
This contradiction proves the lemma.
\end{proof}

The strict gap $0.604>0.602$ provides the margin needed when the original trajectory is transferred to the four transformed coordinates. It remains to verify all hypotheses of the lemma on a block selected from the fixed multiplier and to show that such a block exists for every sufficiently small parameter.

\section{Completion of the proof}\label{sec:completion}

Lemma~\ref{lem:escape} turns four quantitative conditions into a contradiction: the detuning must have the required scale, the transformed actions must remain in the slightly enlarged box, the transformed mass must stay close to $S_*$, and the remainder must be small compared with $\beta\sqrt I$. We verify these conditions for each sufficiently large block on an associated parameter interval. Their verification is uniform in $S_*\in[S_-,S_+]$; only after that do we use overlap to select a block for an arbitrary small parameter.

\subsection{The parameter intervals}

The block detuning is already fixed by \eqref{eq:detuning}. To match it with the exchange scale, we centre the parameter interval at the solution of $\tau_k=23.2\mu S_*I_k$. The allowed $1\%$ uncertainty in $\Delta_k$ and a $5\%$ parameter interval will then fit inside \eqref{eq:D}. For the fixed $M$ and the mass $S_*$ from \eqref{eq:mass}, define
\begin{equation}\label{eq:muwindows}
 I_k=m_k^{-M},\qquad
 \mu_k(S_*)=\frac{\tau_k}{23.2S_*I_k},\qquad
 \mathcal J_k(S_*)=[0.95\mu_k(S_*),1.05\mu_k(S_*)].
\end{equation}
If $\mu\in\mathcal J_k(S_*)$, then \eqref{eq:detuning} gives
\begin{equation}\label{eq:detuning-window}
 \frac{0.99(23.2)}{1.05}
 \leqslant\frac{-\Delta_k}{\mu S_*I_k}
 \leqslant\frac{1.01(23.2)}{0.95}.
\end{equation}
Its endpoints are respectively greater than $21.87$ and less than $24.67$, which verifies the detuning hypothesis of Lemma~\ref{lem:escape}. We next check that the coordinate change and its remainder preserve the other hypotheses throughout these intervals.
Since $M<4\kappa+1/2<8\kappa$,
\[
 \mu\asymp(k+1)^2m_k^{M-8\kappa}\longrightarrow0
\]
on these intervals, uniformly for $S_*\in[S_-,S_+]$.

\subsection{Action and mass bounds after the transformation}

The common-phase transformation preserves the original actions and mass. The second transformation is close to the identity provided $\mu/m_k$ is uniformly small on the chosen interval. This follows directly from \eqref{eq:muwindows}:
\begin{equation}\label{eq:uniform-map-smallness}
 \sup_{\substack{S_*\in[S_-,S_+]\\\mu\in\mathcal J_k(S_*)}}
 \frac\mu{m_k}
 \leqslant\frac{1.05}{23.2S_-}(k+1)^2m_k^{M-8\kappa-1}
 \longrightarrow0.
\end{equation}
The exponent is negative because $M<8\kappa$. We may therefore keep $y$ in the fixed larger ball used to estimate the remainder in \eqref{eq:remainder}, by one starting index uniform in the candidate trajectory.

Apply \eqref{eq:map} to the block $T_k$ and put $m=m_k$. By \eqref{eq:Gnorm},
\[
 |y_j-z_j|\leqslant C\frac\mu m m^{-p},\qquad j\in T_k,
\]
and all other coordinates remain unchanged. Hence
\[
 \bigl||y_j|^2-|z_j|^2\bigr|
 \leqslant2|z_j||y_j-z_j|+|y_j-z_j|^2
 \leqslant C\left(\frac\mu m+\frac{\mu^2}{m^2}\right)I_k.
\]
The total mass difference is bounded by four times the same bound, since the map changes only these four coordinates. There is also a distinct, deterministic difference between their reference actions and $I_k$: the adjacent indices are $m,m+1,m+2,m+3$, rather than all equal to $m$.
On the original tube,
\[
 0.7\left(\frac m{m+r}\right)^M
 \leqslant\frac{|z_{m+r}|^2}{I_k}
 \leqslant1.3\left(\frac m{m+r}\right)^M,
 \qquad r=0,1,2,3.
\]
The four weight ratios tend to one. Increasing a starting index uniformly in $S_*$, we can make the lower-end loss due to the weights and the action error due to the transformation each at most $0.0005I_k$. We obtain
\begin{equation}\label{eq:transformed-box}
 0.699I_k\leqslant|y_j|^2\leqslant1.301I_k,
 \qquad j\in T_k.
\end{equation}
Because only four coordinates change and $S_*\geqslant S_->0$, we can also ensure
\begin{equation}\label{eq:transformed-mass}
 \left|\frac{S(y)}{S_*}-1\right|\leqslant10^{-5},
 \qquad \frac{I_k}{S_*}\leqslant10^{-5}.
\end{equation}
Indeed, the relative mass error is at most
\[
 \frac{|S(y)-S_*|}{S_*}
 \leqslant\frac{4C}{S_-}
     \left(\frac\mu m+\frac{\mu^2}{m^2}\right)I_k\longrightarrow0.
\]
Thus the original conserved mass continues to serve as a fixed comparison scale for the transformed dynamics.

\subsection{The normalized remainder}

Take $p=M/2$ in \eqref{eq:remainder} and set $\beta=\mu S_*I_k$.
For $j\in T_k$,
\begin{align}\label{eq:closure}
 \frac{\max_{j\in T_k}|R_j|}{\beta\sqrt{I_k}}
 &\leqslant C_{\kappa,M}
 \frac{\mu^2m^{-1}m^{-M/2}}{\mu S_*m^{-3M/2}}
 \leqslant C'_{\kappa,M}\mu m^{M-1}\notag\\
 &\leqslant C''_{\kappa,M}(k+1)^2m^{2M-8\kappa-1}.
\end{align}
The comparison now identifies the precise restriction on $M$, i.e., $  2M-8\kappa-1<0 $ iff $  M<4\kappa+\frac12 $.
For this strict inequality, $m_k\asymp Q_\kappa^k$ implies
\[
 (k+1)^2m_k^{2M-8\kappa-1}\longrightarrow0.
\]
Hence the error bound required in \eqref{eq:required-error} holds uniformly in time, in the hypothetical trajectory, and in $S_*\in[S_-,S_+]$.
Thus there is $K=K(\kappa,M)$ such that, for every $k\geqslant K$ and every $\mu\in\mathcal J_k(S_*)$,
\[
 \max_{j\in T_k}|R_j(t)|\leqslant10^{-5}\beta\sqrt{I_k},
 \qquad t\geqslant0.
\]
Choose $K$ large enough to impose simultaneously the smallness requirement \eqref{eq:uniform-map-smallness}, the action and mass bounds \eqref{eq:transformed-box}--\eqref{eq:transformed-mass}, the normalized remainder bound, and $1.05\mu_K(S_-)\leqslant1$.
Equation \eqref{eq:remainder} has the form \eqref{eq:forced}, with the real common phase $\chi=-3\mu A_m(y)$.
Combining \eqref{eq:detuning-window}, \eqref{eq:transformed-box},
\eqref{eq:transformed-mass}, and \eqref{eq:closure}, Lemma~\ref{lem:escape} gives a contradiction within time $49/(\mu S_*I_k)$.
Every interval $\mathcal J_k(S_*)$, $k\geqslant K$, therefore rules out the hypothetical trajectory.

\subsection{Overlap and a uniform small-parameter threshold}

The contradiction on one block rules out only one interval of parameters. To obtain the theorem for every sufficiently small positive parameter, we now use the fine geometric spacing chosen in \eqref{eq:centers}. By \eqref{eq:muwindows},
\[
 \mu_k(S_*)=\frac{(k+1)^2}{23.2S_*}m_k^{M-8\kappa}\longrightarrow0,
\]
and
\begin{equation}\label{eq:windowratio}
 \frac{\mu_{k+1}(S_*)}{\mu_k(S_*)}
 =\left(\frac{k+2}{k+1}\right)^2
  \left(\frac{m_{k+1}}{m_k}\right)^{M-8\kappa}
 \longrightarrow Q_\kappa^{M-8\kappa}
 \in(e^{-0.08},1)\subset(19/21,1).
\end{equation}
The limiting ratio lies between zero and one because $M<8\kappa$; it is greater than $e^{-0.08}$ because
$(M-8\kappa)/(100\kappa)>-0.08$.
In particular, it exceeds $19/21=0.95/1.05$. This is exactly the condition needed for overlap of the intervals with relative endpoints $0.95$ and $1.05$.
The ratio is independent of $S_*$. After increasing $K$, the centres decrease strictly and adjacent intervals overlap:
\[
 1.05\mu_{k+1}(S_*)>0.95\mu_k(S_*),\qquad k\geqslant K.
\]
The union is therefore an interval; its lower endpoint is zero because $\mu_k(S_*)\to0$, and its largest endpoint is $1.05\mu_K(S_*)$. Consequently,
\begin{equation}\label{eq:coverall}
 \bigcup_{k\geqslant K}\mathcal J_k(S_*)
 =(0,1.05\mu_K(S_*)].
\end{equation}
The last endpoint depends on the candidate mass. To obtain one threshold for every candidate trajectory, we take its minimum over the allowed mass interval. Since $\mu_K(S_*)$ is proportional to $S_*^{-1}$, define
\begin{equation}\label{eq:uniform-threshold}
 \mu_*(\kappa,M)=1.05\mu_K(S_+)
 =\frac{1.05\tau_K}{23.2S_+I_K}>0.
\end{equation}
For every $S_*\in[S_-,S_+]$,
$\mu_K(S_*)\geqslant\mu_K(S_+)$.
Thus every $0<\mu<\mu_*(\kappa,M)$ belongs to at least one interval
$\mathcal J_k(S_*)$ associated with the mass of a hypothetical trajectory.
The preceding contradiction applies. The choice of the detecting block depends on the trajectory's mass; the multiplier and the threshold do not.

\subsection{Amplitude rescaling}

It remains to pass from the unit reference profile to the amplitude $A$. This does not require a new multiplier or any new block estimates: the homogeneous equation converts the amplitude into the perturbation parameter. For $A>0$, write $q=\sqrt A\,z$ in \eqref{eq:fourier}.
Quintic homogeneity gives the exact equation
\[
 \dot z_j=i\lambda_jz_j+i(\mu A^2)N_j(z),
\]
and
\[
 \frac{|q_j|^2}{A\la j^{-M}}=
 \frac{|z_j|^2}{\la j^{-M}}.
\]
The case $A=1$ therefore proves the absence of forward complete trajectories under \eqref{eq:smallparameter} for every $A>0$. A torus carrying a complete flow would supply such a trajectory. This proves Theorem~\ref{thm:main}, subject to Proposition~\ref{prop:fixedpotential}, which is established below.

\section{Auxiliary estimates and selection of the multiplier}\label{sec:auxiliary}

The dynamical proof used Proposition~\ref{prop:fixedpotential} only through the global arithmetic condition and the weighted inverse sum \eqref{eq:inverse}. We now prove that one multiplier has both properties. The argument has a deterministic and a probabilistic part. Momentum and kinetic energy first control how many weighted monomials can have small divisors. Conditional fractional moments then control the remaining inverses and select one realization for all required decay exponents.

\subsection{Weighted summation over a low-energy window}\label{sec:window}

For large kinetic divisor, division itself gives the factor $m^{-1}$ in \eqref{eq:inverse}. Small kinetic divisors require a different gain: the momentum and energy relations force additional inputs to be large, making their polynomial weights small. We estimate the entire low-energy window at once, so the subsequent fractional-moment argument incurs no additional sum over its energy levels.

For $s>1$, the elementary convolution estimate is
\begin{equation}\label{eq:convolution}
 \sum_{a+b=n}\la a^{-s}\la b^{-s}\leqslant C_s\la n^{-s}.
\end{equation}
Indeed, at least one of $|a|,|b|$ is at least $|n|/2$; extract its weight and sum the other one. Iteration gives the corresponding bound for every fixed number of factors, with arbitrary signs. In particular,
\[
 \|N(q)\|_p\leqslant C_p\|q\|_p^5,\qquad
 \|DN(q)[h]\|_p\leqslant C_p\|q\|_p^4\|h\|_p.
\]
The same bounds, with adjusted constants, hold for $P$ and for its retained components $P_j^0$: they are sums of quintic momentum-preserving monomials with uniformly bounded multiplicities. Applying the derivative bound on a ball gives the Lipschitz estimates used in \eqref{eq:exactresidual}.

For five ordered inputs satisfying
$n_1+n_2+n_3-n_4-n_5=m$, put
\[
 D_0=n_1^2+n_2^2+n_3^2-n_4^2-n_5^2-m^2,
 \qquad
 W_s(\boldsymbol n;m)=
 \frac{\prod_{i=1}^5\la{n_i}^{-s}}{\la m^{-s}}.
\]

\begin{lemma}\label{lem:window}
For every $s>1$, there are constants $C_s,m_s$ such that, for every $m\geqslant m_s$,
\begin{equation}\label{eq:window}
 \sum_{\substack{n_1+n_2+n_3-n_4-n_5=m\\
                |D_0|\leqslant m/4,\ n_1,n_2,n_3\ne m}}
 W_s(\boldsymbol n;m)
 \leqslant C_s\bigl(m^{1/2-s}+m^{2-2s}\bigr).
\end{equation}
\end{lemma}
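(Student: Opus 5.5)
The plan is a case analysis on how many of the five inputs have size comparable to $m$. Momentum and the window $|D_0|\leqslant m/4$ are used to force extra large inputs, or to pin down a free index to $O(1)$ values. Throughout, $W_s(\boldsymbol n;m)=\la m^{s}\prod_i\la{n_i}^{-s}$. Sums over bounded-size inputs are controlled by the convolution estimate \eqref{eq:convolution} and its iterates, which cost only a constant $C_s$ per free summed index because $s>1$.

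\emph{Step 1 (reduction to one or several large inputs).} Since the signed sum of the $n_i$ equals $m$, we have $\max_i|n_i|\geqslant m/5$. Fix a small absolute constant $c$ and call an input \emph{large} if $|n_i|\geqslant cm$.

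First suppose the only large input is a conjugate one, say $n_4$. Then momentum forces $n_4\approx-m$ up to $O(cm)$, and $D_0\leqslant -n_4^2-m^2+O(c^2m^2)\leqslant -m^2$. This is incompatible with the window, so this case is empty. Hence, if exactly one input is large, it is a positive input $n_r$ with $r\leqslant3$.

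\emph{Step 2 (exactly one large input, positive).} Write $n_r=m-\delta$, where $\delta$ is the signed sum of the four small inputs; the hypothesis $n_r\ne m$ gives $\delta\ne0$. Then
\[
 D_0=-2m\delta+\delta^2\pm(\text{squares of the small inputs}),
\]
so $|D_0|\leqslant m/4$ forces $\sum_{\text{small}}n_i^2\geqslant m|\delta|-m/4\gtrsim m|\delta|$.

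If only one small input $n_*$ were of size $\gtrsim\sqrt{m|\delta|}$, then $|\delta|\gtrsim|n_*|$, which would give $n_*^2\gtrsim m|n_*|$, i.e.\ $|n_*|\gtrsim m$, a contradiction. So at least two small inputs have size $\gtrsim\sqrt m$.

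Now fix all inputs except one of these two, call it $n_*$. As a function of $n_*$, the quantity $D_0$ is quadratic with derivative $\asymp m$, since the $\mp2m\delta$ term dominates while $|n_*|<cm$. Consequently the window of width $m/2$ admits only $O(1)$ values of $n_*$. The weight ratio $\la m^{s}\la{n_r}^{-s}$ is bounded, the other $\sqrt m$-sized input contributes $\sum_{|n|\gtrsim\sqrt m}\la n^{-s}\lesssim m^{(1-s)/2}$, the $O(1)$ values of $n_*$ contribute at most $m^{-s/2}$, and the remaining inputs are summable. Collecting these factors gives a bound of the form $m^{1/2-s}$; this is the bookkeeping I would verify carefully.

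\emph{Step 3 (two or more large inputs).} Extract two large weights to get a factor $\la m^{s}(cm)^{-2s}\lesssim m^{-s}$. Then use momentum to fix one large index, and sum the remaining indices by \eqref{eq:convolution}. If a large index must be summed freely, its tail is $O(m^{1-s})$. This should yield the term $m^{2-2s}$, possibly after also using the energy window to restrict the free large index, as in Step 2.

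The main obstacle is Step 2. It is exactly where the hypothesis $n_1,n_2,n_3\ne m$ enters: without it, the monomial $q_m\cdot q_n\bar q_n\cdots$ would contribute a non-decaying term. The difficulty is to extract enough decay from the quadratic counting to reach $m^{1/2-s}$ rather than only $m^{(1-s)/2}$. My first attempt would be to dyadically decompose the two $\sqrt m$-sized small inputs by size and use the linear-in-$n_*$ counting on each dyadic piece.
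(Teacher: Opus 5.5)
Your Steps 1 and 2 follow the paper's proof, and the bookkeeping you flagged already closes, since $m^{(1-s)/2}\cdot m^{-s/2}=m^{1/2-s}$. The paper reaches the same exponent slightly differently. It solves for a \emph{largest} small input $a$, so that $\la{a}^{-s}\leqslant\la{b}^{-s}$ for the largest fixed input $b$, and then sums $\la{b}^{-2s}$ over $|b|\gtrsim\sqrt m$. No dyadic decomposition is needed. Your dichotomy producing two small inputs of size $\gtrsim\sqrt m$ is loosely phrased. The precise version is: if the second largest small modulus satisfied $L_2<L/10$, then $|D_0|\geqslant1.4mL-2.72L^2>m/4$. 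So Step 2 is not where the difficulty lies.

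The genuine gap is Step 3. If you only know that two inputs have size $\gtrsim m$, your scheme gives at best $O(m^{1-s})$. You eliminate one of them by momentum, whose weight against $\la{m}^{s}$ is bounded, and sum the other over its tail, $\sum_{|n|\geqslant cm}\la{n}^{-s}\lesssim m^{1-s}$. Extracting both weights first instead leaves a free large index with nothing to sum against. For $s>1$, $m^{1-s}$ dominates both $m^{1/2-s}$ and $m^{2-2s}$, so this does not prove the lemma. The loss also matters downstream: for $p_n$ near $3/2$, the deterministic part of Proposition~\ref{prop:fixedpotential} would be only $j^{1-p_n}\approx j^{-1/2}$ instead of $O(j^{-1})$.

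The Step 2 counting does not transfer directly either. Take two positive large inputs $n_1,n_2$ and eliminate $n_1$; then $\partial_{n_2}D_0=2(n_2-n_1)$, which need not be of size $m$.

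The missing idea is that the energy window forces a \emph{third} large input: if at least two inputs exceed $\epsilon m$, then at least three have modulus $\geqslant c\epsilon m$. The paper proves this as follows.
\begin{itemize}
\item Order the moduli of all six indices, including the output, as $N_1\geqslant\cdots\geqslant N_6$, so that $N_3\geqslant\epsilon m$.
\item If the three largest carry the same energy sign, the window gives $N_1^2\leqslant3N_4^2+m/4$.
\item Otherwise, after reversing signs, write them as $x+y-z=t$ and $x^2+y^2-z^2=E$, with $|t|\leqslant3N_4$ and $|E|\leqslant3N_4^2+m/4$. The identity $(x-t)(y-t)=(t^2-E)/2$ is then incompatible with $N_4\leqslant N_3/100$.
\item In both cases $N_4\gtrsim m$. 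At most one of the four largest indices is the output, so at least three inputs have size $\gtrsim m$.
\end{itemize}
Eliminate one of these three by momentum and sum the other two over their tails. This gives $\bigl(m^{1-s}\bigr)^2=m^{2-2s}$, the second term in the lemma.
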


\begin{proof}
Fix $\epsilon=1/100$. Momentum forces at least one input to have modulus greater than $\epsilon m$. If there are at least two, the energy relation will force a third, giving two summable high-frequency tails after one input is eliminated. If there is exactly one, its displacement from the output is a nonzero integer; the energy window then forces two of the smaller inputs to reach order $\sqrt m$. We treat these cases in turn.

Suppose first that at least two inputs have modulus greater than $\epsilon m$.
We claim that at least three inputs have modulus at least $c\epsilon m$, with an absolute $c>0$.
Order the moduli of all six indices, including the output, as
$N_1\geqslant\cdots\geqslant N_6$. Then
$N_1\geqslant m$ and $N_3\geqslant\epsilon m$.
If the three largest indices have the same energy sign, the energy relation implies
\[
 N_1^2\leqslant3N_4^2+m/4,
\]
which forces $N_4\geqslant cN_3$ for large $m$.
Otherwise, after reversing all signs if needed, write the three largest signed spatial indices as $x,y,z$, so that
\[
 x+y-z=t,\qquad x^2+y^2-z^2=E,
 \qquad |t|\leqslant3N_4,\qquad |E|\leqslant3N_4^2+m/4.
\]
Each of $|x|,|y|,|z|$ is at least $N_3$, and
\[
 (x-t)(y-t)=\frac{t^2-E}{2}.
\]
If $N_4\leqslant N_3/100$, the left-hand side has modulus at least
$(0.97N_3)^2$, whereas the right-hand side has modulus at most
$6N_4^2+m/8$. Since $N_3\geqslant\epsilon m$, this is impossible for sufficiently large $m$.
Hence $N_4>N_3/100\geqslant\epsilon m/100$.
At most one of the four largest indices is the output, proving the claim.

Eliminate one of these three large inputs by the momentum relation.
Its weight divided by the output weight is bounded by a constant depending only on $s,\epsilon$.
At least two of the four remaining free inputs have modulus at least $c\epsilon m$. Dropping all other restrictions, their contribution is bounded by
\[
 C_{s,\epsilon}
 \left(\sum_{|n|\geqslant c\epsilon m}\la n^{-s}\right)^2
 \left(\sum_n\la n^{-s}\right)^2
 \leqslant C_sm^{2-2s}.
\]
There are only finitely many choices of the eliminated and large inputs.

It remains to consider exactly one input with modulus greater than $\epsilon m$.
The momentum relation guarantees that there is at least one.
The unique large input must have positive energy sign; otherwise
$D_0\leqslant-m^2+4\epsilon^2m^2$, contradicting the energy window.
Call this input $z$, and denote the other four inputs by $a,b,c,d$, with two positive and two negative signs.
If $h$ is their signed sum, then
\[
 z=m-h,\quad |h|\leqslant4\epsilon m,\quad
 D_0=-2mh+h^2+E_0,\quad E_0=a^2+b^2-c^2-d^2.
\]
The exclusion of a positive input equal to $m$ implies the integer condition $h\ne0$.
Let $L$ and $L_2$ be the largest and second largest moduli among the four small inputs.
Since $|h|\geqslant1$,
\[
 20L^2\geqslant|h^2+E_0|\geqslant\frac{7m}{4},
 \qquad L\geqslant\sqrt{\frac{7m}{80}}.
\]
If $L_2<L/10$, then
$0.7L\leqslant|h|\leqslant1.3L$ and $|E_0|\leqslant1.03L^2$.
Using $L\leqslant\epsilon m$, we would obtain
\[
 |D_0|\geqslant1.4mL-2.72L^2
       \geqslant(1.4-2.72\epsilon)mL>m/4,
\]
a contradiction. Therefore $L_2\geqslant L/10$.

Choose a largest small input, still denoted by $a$, as the variable to be solved for, and fix the other three small inputs. If $a$ has sign $\sigma\in\{1,-1\}$ and their signed sum is $h_0$, then $z=m-\sigma a-h_0$ and
\[
 D_0'(a)=-2\sigma m+2(1+\sigma)a+2\sigma h_0.
\]
On $|a|\leqslant\epsilon m$, $|h_0|\leqslant3\epsilon m$, this derivative has constant sign and modulus at least $1.9m$.
The preimage of the full interval $[-m/4,m/4]$ has length less than one, so at most one integer $a$ is allowed.
Let $b$ now denote a largest modulus among the three fixed inputs. Then
\[
 |b|\geqslant L/10\geqslant c\sqrt m,\qquad
 |a|\geqslant|b|,\qquad
 \frac{\la z^{-s}}{\la m^{-s}}\leqslant C_s.
\]
Thus $\la a^{-s}\leqslant\la b^{-s}$, and summing over the three fixed inputs gives
\[
 C_s\sum_{|b|\geqslant c\sqrt m}\la b^{-2s}
          \left(\sum_n\la n^{-s}\right)^2
 \leqslant C_sm^{1/2-s}.
\]
The finitely many choices of largest inputs and signs are absorbed in $C_s$.
Combining the two cases proves \eqref{eq:window}.
\end{proof}

\subsection{Simultaneous arithmetic and weighted inverse bounds}\label{sec:potential}

We prove Proposition~\ref{prop:fixedpotential} on the probability space already specified in \eqref{eq:centers}--\eqref{eq:potential}. The affine block relations have separated the desired small detuning from the other frequency combinations. We first identify the combinations on which all fixed-width variables cancel. This identifies both the exceptional case in the Diophantine estimate and the terms that the common-phase transformation must remove from the homological equation.

The coefficients of $X_k,Y_k,Z_k$ in $\ell\cdot v$ are
\[
 \ell_{m_k}-\ell_{m_k+3},\qquad
 \ell_{m_k+1}+\ell_{m_k+3},\qquad
 \ell_{m_k+2}+\ell_{m_k+3}.
\]
All three coefficients vanish exactly when
\[
 \ell_{m_k}=\ell_{m_k+3},\qquad
 \ell_{m_k+1}=\ell_{m_k+2}=-\ell_{m_k+3}.
\]
The coefficients outside the blocks vanish exactly when the corresponding $\ell_j$ vanish. Thus, together with those variables, the fixed-width directions have the integer kernel
\begin{equation}\label{eq:kernel}
 \ell=\sum_kt_k\sigma^k,\qquad
 t_k\in\Z,\quad (t_k)_k\text{ finitely supported}.
\end{equation}
A nonzero coefficient in any fixed-width direction is an integer of modulus at least one. The resulting uniform interval has length at least $1/50$.

\subsubsection{The product Diophantine condition}

We bound the probability that at least one integer frequency combination violates \eqref{eq:DC}. Outside the kernel, a random variable of fixed width gives a uniform small-interval probability. On the kernel, only the narrower detuning variables remain, and the factor $(k+1)^2$ in $\tau_k$ compensates for their shrinking widths.

Since $\kappa\geqslant1$,
\[
 \sum_{t\ne0}\frac1{1+t^2\la j^{2\kappa}}
 \leqslant\la j^{-2\kappa}\sum_{t\ne0}t^{-2},
 \qquad \sum_j\la j^{-2\kappa}<+\infty.
\]
The Euler product therefore converges:
\[
 \sum_{\ell\in\Z^{\Z}_*}D_\kappa(\ell)^{-1}
 =\prod_j\left(1+\sum_{t\ne0}\frac1{1+t^2\la j^{2\kappa}}\right)
 <+\infty.
\]
If a random variable is uniform on an interval of length $a$, then, for $0<e\leqslant1/2$,
\[
 \mathbb P\{\dist(V,\Z)<e\}\leqslant2e(1+3/a).
\]
For each exponent vector outside \eqref{eq:kernel}, condition on all variables except one fixed-width variable having a nonzero coefficient. Summing the resulting bounds with $e=\gamma_\kappa/D_\kappa(\ell)$ shows that the total probability of a violation among these exponent vectors is at most $C_\kappa\gamma_\kappa$.

For the kernel exponent vectors \eqref{eq:kernel}, no variable of fixed width is available. Group them by the largest block $k$ with $t_k\ne0$ and condition on all variables except $U_k$.
The variable $U_k$ appears with coefficient $t_k$, giving a uniform interval of length $|t_k|\tau_k/50$.
Set
\[
 b_k:=\sum_{t\ne0}D_\kappa(t\sigma^k)^{-1}.
\]
Since the blocks are disjoint,
\[
 D_\kappa(t\sigma^k)
 =\prod_{r=0}^3\bigl(1+t^2(m_k+r)^{2\kappa}\bigr)
 \geqslant |t|^8m_k^{8\kappa},\qquad
 b_k\leqslant Cm_k^{-8\kappa},
\]
and the denominator factorizes over blocks:
\[
 D_\kappa\!\left(\sum_l t_l\sigma^l\right)
 =\prod_l D_\kappa(t_l\sigma^l).
\]
For each earlier block, summing over its coefficient gives the factor $1+b_l$, with the term one corresponding to $t_l=0$. On the largest block the interval bound costs at most $C\tau_k^{-1}$, because $|t_k|\geqslant1$. Hence the sum of kernel bad probabilities is bounded by
\[
 C\gamma_\kappa\sum_k\tau_k^{-1}b_k\prod_{l<k}(1+b_l)
 \leqslant C_{\kappa,L}\gamma_\kappa\sum_k(k+1)^{-2}.
\]
Here $\prod_l(1+b_l)<+\infty$ by geometric growth of $m_l$, and
\[
 \tau_k^{-1}m_k^{-8\kappa}=(k+1)^{-2}.
\]
This is the reason for the polynomial factor in the prescribed detuning. Choose $0<\gamma_\kappa\leqslant1/2$ so that the total arithmetic bad probability is less than $1/4$.
Because $\ell\cdot(j^2)_j\in\Z$, the complementary event is exactly \eqref{eq:DC}.

\subsubsection{A fixed-width direction for every nonretained term}

For the homological inverse we need more than nonvanishing: we shall estimate a fractional moment uniformly in the output. The shrinking widths $\tau_k$ would spoil this bound if a nonretained divisor depended only on those variables. We now use the common-phase cancellation to exclude precisely that situation.

The exponent vector of a quintic output monomial has $\ell^1$ norm at most six.
On the kernel \eqref{eq:kernel},
\[
 \left\|\sum_lt_l\sigma^l\right\|_{\ell^1}=4\sum_l|t_l|.
\]
Thus a nonzero kernel exponent vector of such a monomial must be $\sigma^l$ or $-\sigma^l$ for one block $l$.
For an output $j\in T_k$, the exponent vectors $0,\pm\sigma^k$ have already been retained in \eqref{eq:decomposition}.
If $l\ne k$, multiply a term with exponent vector $\pm\sigma^l$ by $\bar q_j$.
Its degree-six monomial must be the corresponding degree-four block monomial times $|q_j|^2$: the exponent vector has four noncancelling factors and the remaining positive and negative factors have the same index, which must be $j$.
The original output monomial therefore contains exactly one positive occurrence of $q_j$.
Its coefficient vanishes by \eqref{eq:binomial}.
Every term in $P_j^1$ consequently has a nonzero coefficient in a fixed-width random direction.

We can therefore condition on all variables except one fixed-width variable for each nonretained divisor. The inverse power $|x|^{-\vartheta}$ is locally integrable exactly in the range needed here, $0<\vartheta<1$. Conditional integration gives the uniform estimate
\begin{equation}\label{eq:fraction-general}
 \mathbb E|\Delta_\nu|^{-\vartheta}
 \leqslant\frac{100^\vartheta}{1-\vartheta}.
\end{equation}
Indeed, the effective interval length is $a\geqslant1/50$, and the integral of $|x|^{-\vartheta}$ over an interval of length $a$ is maximal when the interval is centred at zero. Its normalized integral is
$2^\vartheta a^{-\vartheta}/(1-\vartheta)$.
The interval centre and the scales of the remaining variables do not enter this bound.

\subsubsection{A countable selection of weights}

We next turn the fractional inverse moments into the absolute inverse sum \eqref{eq:inverse}. Taking a $\vartheta$th power lowers the input weight from $p$ to $p\vartheta$. After using the low-energy estimate and the target bound $m^{-1}$, the two relevant exponents are
\[
 \frac12-(p-1)\vartheta,\qquad
 2-(2p-1)\vartheta.
\]
At $\vartheta=1$ they are negative for every $p>3/2$, so they remain negative for suitable $\vartheta<1$. To obtain one realization for every such $p$, we enforce the estimates first at a countable sequence $p_n\downarrow3/2$. A convenient explicit choice is
\[
 p_n=\frac32+\frac1{4n},\qquad
 \vartheta_n=1-\frac1{8n+2},\qquad n\geqslant1.
\]
Fix $n$ for the moment. We split the nonnegative inverse sum on the left-hand side of \eqref{eq:inverse}, with $p=p_n$, into a part $F_{n,j}$ with a deterministic bound and a remaining random part $V_{n,j}$.

First consider the terms with no positive occurrence of the output coordinate. They satisfy the exclusion $n_1,n_2,n_3\ne j$ in Lemma~\ref{lem:window}. Their kinetic divisor is $D_0$, and
\[
 |\Delta_\nu-D_0|\leqslant6\|v\|_\infty<7.
\]
For $|D_0|\geqslant j/4$ and sufficiently large $j$, we have
$|\Delta_\nu|\geqslant|D_0|-7\geqslant j/8$. The full normalized weight sum is bounded by the convolution estimate, so this part contributes $O_n(j^{-1})$.
For $24<|D_0|<j/4$, the inverse divisor is at most one, and Lemma~\ref{lem:window} gives
\[
 C_n\bigl(j^{1/2-p_n}+j^{2-2p_n}\bigr)=O_n(j^{-1}).
\]
Both exponents in the last display are less than $-1$ because $p_n>3/2$. These contributions therefore form $F_{n,j}\leqslant D_n/m_k$ for $j\in T_k$. The remaining sum $V_{n,j}$ consists of the small kinetic divisors without positive output factors and all surviving terms with at least two positive output factors.
For the finitely many outputs below any of the large-index thresholds used here or below, take $F_{n,j}=0$ and put the entire inverse sum into $V_{n,j}$. This keeps the previously fixed $L$ unchanged.
To apply Lemma~\ref{lem:window} to the terms with $|D_0|\leqslant24$, we first take $j\geqslant\max\{96,m_{p_n\vartheta_n}\}$. Then $|D_0|\leqslant24$ implies $|D_0|\leqslant j/4$, and the lemma applies with exponent $p_n\vartheta_n$. The finitely many smaller outputs are included by the separate estimate below.

For terms with no positive output occurrence and $|D_0|\leqslant24$, use
\[
 \left(\sum_i a_i\right)^{\vartheta_n}
 \leqslant\sum_i a_i^{\vartheta_n},\qquad
 W_{p_n}^{\vartheta_n}=W_{p_n\vartheta_n},
\]
the fractional inverse bound \eqref{eq:fraction-general}, and Lemma~\ref{lem:window}.
For these infinite sums, we first apply subadditivity and the conditional inverse-moment bound to finite sets of monomials, then pass to the limit by monotone convergence and Tonelli's theorem.
The coefficients $|c_\nu|^{\vartheta_n}$ are uniformly bounded. Applying the fractional moment estimate term by term leaves exactly the weight sum with exponent $p_n\vartheta_n$. Their contribution to $\mathbb EV_{n,j}^{\vartheta_n}$ is therefore bounded by
\[
 C_n\bigl(j^{1/2-p_n\vartheta_n}+j^{2-2p_n\vartheta_n}\bigr).
\]
For the remaining terms, the two positive output factors themselves supply the weight needed for summability. Selecting two such factors leaves three inputs with momentum $a-b-c=-j$. With $s=p_n\vartheta_n>1$, division by the output weight cancels one of the two factors $\la j^{-s}$, so their normalized weight sum is at most
\[
 C_n\la j^{-s}
 \sum_{a-b-c=-j}\la a^{-s}\la b^{-s}\la c^{-s}
 \leqslant C_n\la j^{-2s}.
\]
Consequently,
\begin{equation}\label{eq:nearprob}
 \mathbb EV_{n,j}^{\vartheta_n}
 \leqslant C_n\left(
 j^{1/2-p_n\vartheta_n}+j^{2-2p_n\vartheta_n}
                         +j^{-2p_n\vartheta_n}\right).
\end{equation}
For each of the finitely many earlier outputs, the fractional moment of the entire inverse sum is finite by \eqref{eq:fraction-general} and the unrestricted convolution estimate with exponent $s>1$. Enlarging $C_n$ therefore includes them in \eqref{eq:nearprob}.
In particular, the finite fractional moment implies that each corresponding inverse sum is finite almost surely. The output indices and the weights $p_n$ form a countable family, so these finiteness statements hold simultaneously outside a single null set.

To control the event $V_{n,j}>H_n/m_k$, Markov's inequality introduces the factor $m_k^{\vartheta_n}$. Since $j\in T_k$ is comparable with $m_k$, the resulting powers in \eqref{eq:nearprob} have exponents
\begin{align*}
 \frac12-(p_n-1)\vartheta_n
 &=-\frac{6n+1}{8n(4n+1)}<0,\\
 2-(2p_n-1)\vartheta_n&=-\frac1{4n}<0,\\
 -(2p_n-1)\vartheta_n&=-2-\frac1{4n}<0.
\end{align*}
Geometric growth of $m_k$ yields
\[
 T_n:=\sum_k\sum_{j\in T_k}
            m_k^{\vartheta_n}\mathbb EV_{n,j}^{\vartheta_n}<+\infty.
\]
Choose $H_n\geqslant1$ finite with
$H_n^{-\vartheta_n}T_n\leqslant2^{-n-3}$.
For fixed $n$, Markov's inequality and a union bound give
\[
 \mathbb P\left\{V_{n,j}>\frac{H_n}{m_k}
                 \text{ for some }k,\ j\in T_k\right\}
 \leqslant H_n^{-\vartheta_n}
      \sum_k\sum_{j\in T_k}m_k^{\vartheta_n}
                         \mathbb EV_{n,j}^{\vartheta_n}
 \leqslant2^{-n-3}.
\]
Summing over $n$ bounds the probability of any of these failures by $1/8$.
Together with the arithmetic estimate, this leaves probability at least $5/8$ for the simultaneous conditions. Fix a realization in this event. It satisfies \eqref{eq:inverse} for every $p_n$.

The remaining step extends the countable family to every real $p>3/2$ without any further selection of the multiplier. The momentum relation and $\la{n_i}\geqslant1$ imply
\[
 \frac{\la j}{\prod_{i=1}^5\la{n_i}}\leqslant5.
\]
For $p\geqslant p_n$,
\[
 W_p(\boldsymbol n;j)
 \leqslant5^{p-p_n}W_{p_n}(\boldsymbol n;j).
\]
Every real $p>3/2$ is at least one of the $p_n$, so the same realization satisfies \eqref{eq:inverse} for all such $p$. This proves Proposition~\ref{prop:fixedpotential} and completes the proof of Theorem~\ref{thm:main}.

	\section*{Acknowledgements} 
Z. Tong  was supported by the National Natural Science Foundation of China (Grant No. 12601347) and the China Postdoctoral Science Foundation (Grant No. 2025M783102). Y. Li was supported in part by the National Natural Science Foundation of China (Grant Nos. 12471183 and 12531009).

\end{document}